\newcommand{\nv}{\boldsymbol}
\newcommand{\lc}{[\![}
\newcommand{\rc}{]\!]}
\newcommand{\1}[1]{{\boldsymbol 1_{\{#1\}}}}
\newtheorem{theorem}{Theorem}
\newtheorem{assumption}{Assumption}
\newtheorem{corollary}{Corollary}
\newtheorem{definition}{Definition}
\newtheorem{lemma}{Lemma}
\newenvironment{proof}[1][Proof]{\textbf{#1.} }{\ \rule{0.5em}{0.5em}}
\begin{document}

\title{Linking progressive and initial filtration expansions}
\author{Younes Kchia\thanks{%
Centre de Math\'ematiques Appliqu\'ees, Ecole Polytechnique, Paris} \and %
Martin Larsson\thanks{%
School of Operations Research, Cornell University, Ithaca, NY, 14853} \and %
Philip Protter\thanks{%
Statistics Department, Columbia University, New York, NY, 10027} 
\thanks{%
Supported in part by NSF grant DMS-0906995}} 
\date{\today}
\maketitle

\begin{abstract}
In this paper we study progressive filtration expansions with random times. We show how semimartingale decompositions in the expanded filtration can be obtained using a natural link between progressive and initial expansions. The link is, on an intuitive level, that the two coincide after the random time. We make this idea precise and use it to establish known and new results in the case of expansion with a single random time. The methods are then extended to the multiple time case, without any restrictions on the ordering of the individual times. Finally we study the link between the expanded filtrations from the point of view of filtration shrinkage. As the main analysis progresses, we indicate how the techniques can be generalized to other types of expansions.
\end{abstract}

\section{Introduction}

Expansion of filtrations is a well-studied topic that has been investigated both in theoretical and applied contexts, see for instance~\cite{Jacod:1987}, \cite{Jeanblanc/LeCam:2009}, \cite{Jeulin/Yor:1985}. There are two main types of filtration expansion: initial expansion and progressive expansion. The initial expansion of a filtration $\mathbb F=(\mathcal F_t)_{t\geq 0}$ with a random variable $\tau$ is the filtration $\mathbb H$ obtained as the right-continuous modification of $(\mathcal F_t\vee\sigma(\tau))_{t\geq 0}$. \emph{A priori} there is no particular interpretation attached to this random variable. The progressive expansion $\mathbb G$ is obtained as any right-continuous filtration containing $\mathbb F$ and making $\tau$ a stopping time. In this case $\tau$ should of course be nonnegative since it has the interpretation of a random time. We often use the smallest such filtration in this paper.  On a complete probability space $(\Omega,\mathcal{F},P)$ a filtration $\mathbb{F}$ is said to satisfy the ``usual hypotheses'' if it is right continuous and if $\mathcal{F}_0$ contains all the $P$ null sets of $\mathcal{F}$.  Therefore $\mathcal{F}_t$ contains all the $P$ null sets of $\mathcal{F}$ as well, for any $t\geq 0$.  We will assume throughout this paper that all filtrations satisfy the usual hypotheses.

To the best of our knowledge, these two types of expansions have so far been viewed as inherently different in the literature. The purpose of the present paper is to demonstrate that this is not the case---in fact, there is a very natural connection between the initial and progressive expansions. The reason is, on an intuitive level, that the filtrations $\mathbb G$ and $\mathbb H$ coincide after time $\tau$. We make this idea precise for filtrations $\mathbb H$ that are not necessarily obtained as initial expansions. This, in combination with a classical theorem by Jeulin and Yor, allows us to show how the semimartingale decomposition of an $\mathbb F$~local martingale, when viewed in the progressively expanded filtration $\mathbb G$, can be obtained on all of $[0,\infty)$, provided that its decomposition in the filtration $\mathbb H$ is known. One well-known situation where this is the case is when \emph{Jacod's criterion} is satisfied. This is, however, not the only case, and we give an example using techniques based on Malliavin calculus developed by Imkeller et al.~\cite{Imkeller_etal:2001}. These developments, which all concern expansion with a single random time, are treated in Section~\ref{S:1}. The technique is, however, applicable in more general situations than expansion with a single random time. As an indication of this, we perform \emph{en passant} the same analysis for what we call the $(\tau,X)$-progressive expansion of $\mathbb F$, denoted $\mathbb G^{(\tau,X)}$. Here $\tau$ becomes a stopping time in the larger filtration, and the random variable $X$ becomes $\mathcal G^{(\tau,X)}_\tau$-measurable.

In Section~\ref{S:2} we extend these ideas in order to deal with the case where the base filtration~$\mathbb F$ is expanded progressively with a whole vector $\nv \tau=(\tau_1,\ldots,\tau_n)$ of random times. Unlike previous work in the literature, see for instance \cite{Jeanblanc/LeCam:2009}, we do not impose any conditions on the ordering of the individual times. After establishing a general semimartingale decomposition result we treat the special case where Jacod's criterion is satisfied for the whole vector $\nv\tau$, and we show how the decompositions may be expressed in terms of $\mathbb{F}$~conditional densities of $\nv \tau$ with respect to its law.

Finally, in Section~\ref{S:shr} we take a different point of view and study the link between the filtrations $\mathbb G$ and $\mathbb H$ from the perspective of filtration shrinkage.

\section{Expansion with one random time} \label{S:1}

Assume that a filtered probability space $(\Omega, \mathcal F, \mathbb F, P)$ is given, and let $\tau$ be a \emph{random time}, i.e.~a nonnegative random variable. Typically $\tau$ is not a stopping time with respect to $\mathbb F$. Consider now the larger filtrations $\mathbb G^{\tau} = (\mathcal G^{\tau}_t)_{t\geq 0}$ and $\tilde{\mathbb{G}} = (\tilde{\mathcal G_t})_{t\geq 0} $ given by
$$
\mathcal G^{\tau}_t = \bigcap_{u>t} \mathcal G^{0,\tau}_u  \qquad \text{where}\qquad   \mathcal G^{0,\tau}_t =\mathcal F_t \vee \sigma(\tau \wedge t).
$$
and
$$
\tilde{\mathcal{G}_t}=\big\{A \in \mathcal{F}, \exists A_t\in \mathcal{F}_t \mid A\cap\{\tau>t\} = A_t\cap\{\tau>t\} \big\}
$$
One normally refers to $\mathbb G^{\tau}$ as the \emph{progressive expansion} of $\mathbb F$ with $\tau$, and it can be characterized as the smallest right-continuous filtration that contains $\mathbb F$ and makes $\tau$ a stopping time.

Throughout this section $\mathbb{G}$ will denote any right-continuous filtration containing $\mathbb{F}$, making $\tau$ a stopping time and satisfying $\mathcal{G}_t\cap\{\tau>t\} = \mathcal{F}_t\cap\{\tau>t\}$ for all $t\geq 0$. Our goal in this section is to analyze how $\mathbb F$~semimartingales behave in the progressively expanded filtration $\mathbb{G}$. In particular, in case they remain semimartingales in $\mathbb G$, we are interested in their canonical decompositions. Under a well known and well studied hypothesis due to Jacod~\cite{Jacod:1987}, this has been done by Jeanblanc and Le~Cam~\cite{Jeanblanc/LeCam:2009} in the filtration $\mathbb{G}^{\tau}$. As one consequence of our approach, we are able to provide a short proof of their main result. Moreover, our technique also works for a larger class of progressively expanded filtrations and under other conditions than Jacod's criterion.

The $\mathbb G$~decomposition before time $\tau$ of an $\mathbb F$~local martingale $M$ follows from a classical and very general theorem by Jeulin and Yor~\cite{Jeulin/Yor:1985}, which we now recall.

\begin{theorem}\label{T:JY}
Fix an $\mathbb F$~local martingale $M$ and define $Z_t=P(\tau>t\mid\mathcal F_t)$ as the optional projection of $\nv 1_{\lc \tau,\infty\lc}$ onto $\mathbb F$, let $\mu$ be the martingale part of its Doob-Meyer decomposition, and let $J$ be the dual predictable projection of $\Delta M_\tau\nv 1_{\lc\tau,\infty\lc}$ onto~$\mathbb F$. Then
$$
M_{t\wedge\tau} - \int_0^{t\wedge\tau} \frac{d\langle M,\mu \rangle_s + dJ_s}{Z_{s-}}
$$
is a local martingale in both $\mathbb G^{\tau}$ and $\tilde{\mathbb G}$.
\end{theorem}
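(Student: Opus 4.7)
The strategy is to prove the statement for any right-continuous filtration $\mathbb G \supset \mathbb F$ that makes $\tau$ a stopping time and satisfies $\mathcal G_t \cap \{\tau > t\} = \mathcal F_t \cap \{\tau > t\}$, since both $\mathbb G^\tau$ and $\tilde{\mathbb G}$ satisfy this condition (the latter by construction, the former by a standard monotone-class argument). This unifies the two cases into a single proof.

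First I would reduce the martingale property to an identity on $\mathbb F$. Let $N$ denote the candidate process. After a standard localization that keeps $Z_{u-}^{-1}$ integrable on $\lc 0,\tau\rc$, it suffices to show $E[\nv 1_A(N_t-N_s)]=0$ for $A\in\mathcal G_s$ and $s<t$. Because $N$ is stopped at $\tau$, only the part of $A$ on $\{\tau>s\}$ contributes, and by hypothesis this part has the form $A_s\cap\{\tau>s\}$ for some $A_s\in\mathcal F_s$. Absorbing $\nv 1_{A_s}$ reduces the claim to the $\mathbb F$-conditional identity
$$
E\bigl[(M_{t\wedge\tau}-M_s)\nv 1_{\{\tau>s\}}\,\bigm|\,\mathcal F_s\bigr] = E\biggl[\int_s^{t\wedge\tau}\frac{d\langle M,\mu\rangle_u+dJ_u}{Z_{u-}}\nv 1_{\{\tau>s\}}\,\biggm|\,\mathcal F_s\biggr].
$$

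Next I would verify this identity by an integration-by-parts computation on $MZ$. Writing the Doob--Meyer decomposition $Z = Z_0 + \mu + A$ with $A$ $\mathbb F$-predictable of finite variation, one has $d(MZ) = M_-\,d\mu + Z_-\,dM + M_-\,dA + d[M,Z]$. Taking the predictable compensator, $[M,Z]$ contributes $\langle M,\mu\rangle$ from the continuous and predictable-discontinuous components, together with the compensator of the discontinuity $\Delta M_\tau\,\nv 1_{\lc\tau,\infty\lc}$ coming from the jump of $Z$ at $\tau$; by definition of $J$, this latter compensator is exactly $J$. The projection identity $E[X\nv 1_{\{\tau>u\}}\mid\mathcal F_u]=XZ_u$ for $\mathcal F_u$-measurable $X$, combined with the tower property, rewrites the left-hand side above in terms of $MZ$, and the decomposition then identifies it with the right-hand side.

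The main obstacle is the careful accounting of the jump of $M$ at $\tau$. Since $\tau$ is in general $\mathbb F$-totally inaccessible, $\Delta M_\tau$ need not vanish, and the cross term $\Delta M_\tau\Delta Z_\tau\nv 1_{\lc\tau,\infty\lc}$ inside $[M,Z]$ cannot be absorbed into $\langle M,\mu\rangle$; this is precisely where $J$ must appear. Once the compensators of $[M,Z]$ and of $Z$ are cleanly separated into their continuous-quadratic-variation part and the projected-jump part at $\tau$, the remainder of the argument is purely algebraic manipulation of the integration-by-parts formula.
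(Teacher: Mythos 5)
The paper itself states Theorem~\ref{T:JY} without proof, quoting it as a classical result of Jeulin and Yor, so your attempt has to be judged against the standard argument rather than an in-paper one. Your overall architecture matches that standard argument: reduce the $\mathbb G$ martingale property, via $\mathcal G_s\cap\{\tau>s\}=\mathcal F_s\cap\{\tau>s\}$ and the fact that the candidate process is stopped at $\tau$, to an $\mathcal F_s$-conditional identity, and handle the right-hand side by noting that the predictable projection of $\nv 1_{\lc 0,\tau\rc}$ is $Z_-$, so the $1/Z_{s-}$ cancels and the right side has compensator increment $d\langle M,\mu\rangle+dJ$. Up to that point the sketch is sound.

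The genuine gap is in your accounting of where $J$ comes from. Write the Doob--Meyer decomposition as $Z=Z_0+\mu-A^p$, where $A^p$ is the $\mathbb F$-dual predictable projection of $\nv 1_{\lc\tau,\infty\lc}$. Then the predictable compensator of $[M,Z]=[M,\mu]-[M,A^p]$ is exactly $\langle M,\mu\rangle$: since $A^p$ is predictable of finite variation, $[M,A^p]$ is already a local martingale (Yoeurp's lemma), so nothing resembling $J$ can be extracted from $[M,Z]$; moreover the term you single out, $\Delta M_\tau\Delta Z_\tau\nv 1_{\lc\tau,\infty\lc}$, carries the extra factor $\Delta Z_\tau$, so its compensator is not $J$ in any case. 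Where $J$ really enters is the term your sketch never treats: on $\{\tau>s\}$ one has $M_{t\wedge\tau}=M_t\1{\tau>t}+M_{\tau-}\1{s<\tau\leq t}+\Delta M_\tau\1{s<\tau\leq t}$. The fixed-time piece is handled by $E[M_t\1{\tau>t}\mid\mathcal F_t]=M_tZ_t$ and the $MZ$ integration by parts; the evaluation of the predictable process $M_-$ at $\tau$ is converted, via the defining property of $A^p$, into $\int_{(s,t]}M_{u-}\,dA^p_u$, which cancels the $-\int_{(s,t]}M_{u-}\,dA^p_u$ arising from $\int M_-\,dZ$; and it is the compensation of $\Delta M_\tau\1{s<\tau\leq t}$ --- by the very definition of $J$ --- that produces the $dJ$ contribution. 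Your projection identity $E[X\1{\tau>u}\mid\mathcal F_u]=XZ_u$ applies only to fixed (or $\mathbb F$-stopping) times and cannot rewrite $M_\tau\1{s<\tau\leq t}$, so as written the argument both omits that term and misattributes $J$ to the bracket; to repair it you must bring in the dual projections of $\nv 1_{\lc\tau,\infty\lc}$ and of $\Delta M_\tau\nv 1_{\lc\tau,\infty\lc}$ explicitly as above, after which the cancellation is indeed purely algebraic.
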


The $\mathbb{G}$ decomposition before time $\tau$ follows as a straightforward corollary of Theorem \ref{T:JY} using the following shrinkage result by F\"{o}llmer and Protter, see \cite{Follmer/Protter:2010}.

\begin{lemma}\label{L:shr}
Let $\mathbb{E}\subset\mathbb{F}\subset\mathbb{G}$ be three filtrations. Let $X$ be a $\mathbb{G}$ local martingale. If both $X$ and the optional projection of $X$ onto $\mathbb{E}$ are local martingales, then the optional projection of $X$ onto $\mathbb{F}$ is an $\mathbb{F}$ local martingale.
\end{lemma}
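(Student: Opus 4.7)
The plan is to use a localizing sequence $(S_n)$ of $\mathbb{E}$ stopping times for $V:={}^{o,\mathbb{E}}X$ as an $\mathbb{F}$ localizing sequence for $Y:={}^{o,\mathbb{F}}X$, and to bootstrap the required martingale property of each $Y^{S_n}$ from a $\mathbb{G}$ localization of $X$. Pick $S_n\uparrow\infty$ with each $V^{S_n}$ a uniformly integrable $\mathbb{E}$ martingale; since $\mathbb{E}\subset\mathbb{F}\subset\mathbb{G}$, each $S_n$ is simultaneously an $\mathbb{F}$ and a $\mathbb{G}$ stopping time, so it suffices to show that $Y^{S_n}$ is an $\mathbb{F}$ martingale for every~$n$.

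For this, fix $s<t$ and $A\in\mathcal{F}_s$, and decompose according to $\{S_n\le s\}$ versus $\{S_n>s\}$. On $\{S_n\le s\}$, both $Y_{t\wedge S_n}$ and $Y_{s\wedge S_n}$ coincide with $Y_{S_n}$, so the required identity is immediate. On $B:=A\cap\{S_n>s\}\in\mathcal{F}_s$, the inclusion $B\subset\{t\wedge S_n>s\}$ places $B$ in $\mathcal{F}_{t\wedge S_n}$, and the defining property of the optional projection then gives $E[Y_{t\wedge S_n}\mathbf{1}_B]=E[X_{t\wedge S_n}\mathbf{1}_B]$ and $E[Y_s\mathbf{1}_B]=E[X_s\mathbf{1}_B]$. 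The martingale identity for $Y^{S_n}$ thereby reduces to the $\mathbb{G}$ side equality $E[X_{t\wedge S_n}\mathbf{1}_B]=E[X_s\mathbf{1}_B]$.

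This equality would be immediate from optional sampling were $X^{S_n}$ a uniformly integrable $\mathbb{G}$ martingale, since $B\in\mathcal{F}_s\subset\mathcal{G}_s$ and $s\le t\wedge S_n$ on $B$. To handle the local case I would insert a $\mathbb{G}$ localizing sequence $T_k\uparrow\infty$ for $X$: applying optional sampling to the uniformly integrable $\mathbb{G}$ martingales $X^{T_k}$ at the bounded $\mathbb{G}$ stopping times $s$ and $t\wedge S_n$ yields $E[X_{t\wedge S_n\wedge T_k}\mathbf{1}_B]=E[X_{s\wedge T_k}\mathbf{1}_B]$, and both sides converge pointwise to the targets as $k\to\infty$. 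The main obstacle, and the technical heart of the proof, is upgrading this pointwise convergence to $L^1$ convergence; here the hypothesis on $V^{S_n}$ is decisive, since the uniform integrability of $\{V_{t\wedge S_n}\}_t$, combined with the identification $V_{t\wedge S_n}=E[X_{t\wedge S_n}\mid\mathcal{E}_{t\wedge S_n}]$ (valid because $t\wedge S_n$ is an $\mathbb{E}$ stopping time), furnishes enough domination via a conditional-expectation argument to pass the limit inside the expectation. This yields the $\mathbb{F}$ martingale property of $Y^{S_n}$ and, because $S_n\uparrow\infty$, the $\mathbb{F}$ local martingale property of $Y$.
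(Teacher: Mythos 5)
Your reduction is carried out correctly as far as it goes: the $S_n$ reducing $V={}^{o,\mathbb{E}}X$ are indeed stopping times for all three filtrations, the split over $\{S_n\le s\}$ and $B=A\cap\{S_n>s\}$ is legitimate, $B\in\mathcal{F}_{t\wedge S_n}$, and the claim does reduce to $E[X_{t\wedge S_n}\mathbf{1}_B]=E[X_s\mathbf{1}_B]$. But the step you yourself identify as the technical heart is exactly where the argument fails, and the mechanism you propose cannot supply it. Uniform integrability of $\{V_{t\wedge S_n}\}_t$ gives no control over $\{X_{t\wedge S_n\wedge T_k}\}_k$: conditioning transfers uniform integrability from a family to its projections, never in the reverse direction (project any non-integrable family onto the trivial $\sigma$-field and you obtain constants), so the UI of the stopped $\mathbb{E}$-projection dominates nothing about $X$. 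Moreover, the times $t\wedge S_n\wedge T_k$ are $\mathbb{G}$ stopping times but not $\mathbb{E}$ stopping times, so the identification $V_T=E[X_T\mid\mathcal{E}_T]$ you invoke is not even available at them; it holds only at $\mathbb{E}$ stopping times. The same problem already undermines the earlier displays: to write $E[Y_{t\wedge S_n}\mathbf{1}_B]=E[X_{t\wedge S_n}\mathbf{1}_B]$ and $E[Y_s\mathbf{1}_B]=E[X_s\mathbf{1}_B]$ you need $X_{t\wedge S_n}$ and $X_s$ to be integrable, and this does not follow from the hypotheses: $S_n$ reduces the projection $V$, not $X$, and a local martingale stopped at a time that does not reduce it need not be in $L^1$. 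So the proposal proves the easy part and leaves the actual content—that stopping times taken from the smallest filtration, chosen only to reduce the projection, interact well enough with $X$ itself—unestablished.

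For comparison, the paper does not prove this lemma at all; it is quoted from F\"ollmer and Protter \cite{Follmer/Protter:2010}, and the only shrinkage mechanism the paper itself uses elsewhere is their criterion that a local martingale of the large filtration which can be \emph{reduced by stopping times of the smaller filtration} projects to a local martingale. The whole difficulty of the sandwich statement is precisely that the $S_n$ built from $V$ are not known to reduce $X$, which is why it is a theorem in the cited reference rather than a consequence of optional sampling plus dominated convergence. If you want a complete argument you must either reproduce the F\"ollmer--Protter proof or find a genuinely different route to the required uniform integrability; domination of $X$ by its own projection is not such a route.
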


Putting Theorem \ref{T:JY} and Lemma \ref{L:shr} together provides the $\mathbb{G}$~decomposition before time $\tau$ of an $\mathbb{F}$ local martingale $M$ as given in the following theorem.

\begin{theorem}\label{T:JYExt}
Fix an $\mathbb{F}$ local martingale $M$. Then 
$$
M_{t\wedge\tau} - \int_0^{t\wedge\tau} \frac{d\langle M,\mu \rangle_s + dJ_s}{Z_{s-}}
$$
is a $\mathbb{G}$ local martingale. Here, the quantities $M$, $\mu$ and $J$ are defined as in Theorem \ref{T:JY}.
\end{theorem}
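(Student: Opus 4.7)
The plan is to sandwich the filtration $\mathbb{G}$ between $\mathbb{G}^\tau$ and $\tilde{\mathbb{G}}$, invoke Theorem~\ref{T:JY} at both ends of this chain, and then collapse the gap in the middle using the shrinkage result in Lemma~\ref{L:shr}.

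First I would verify the inclusions $\mathbb{G}^\tau \subset \mathbb{G} \subset \tilde{\mathbb{G}}$. The first is immediate since, by assumption, $\mathbb{G}$ is a right-continuous filtration containing $\mathbb{F}$ for which $\tau$ is a stopping time, and $\mathbb{G}^\tau$ is by definition the smallest such. For the second, take $A \in \mathcal{G}_t$. The hypothesis $\mathcal{G}_t \cap \{\tau > t\} = \mathcal{F}_t \cap \{\tau > t\}$ produces an $A_t \in \mathcal{F}_t$ with $A \cap \{\tau > t\} = A_t \cap \{\tau > t\}$, which is precisely the defining property of $\tilde{\mathcal{G}}_t$.

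Next let $N_t$ denote the candidate process
$$
N_t = M_{t\wedge\tau} - \int_0^{t\wedge\tau} \frac{d\langle M,\mu\rangle_s + dJ_s}{Z_{s-}}.
$$
Theorem~\ref{T:JY} gives, in a single application, that $N$ is a local martingale in both $\mathbb{G}^\tau$ and $\tilde{\mathbb{G}}$. Since the integrand is $\mathbb{F}$-predictable and the integral is stopped at $\tau$, which is a $\mathbb{G}^\tau$ stopping time, $N$ is $\mathbb{G}^\tau$-adapted and hence, via the inclusion above, $\mathbb{G}$-adapted.

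Finally I would apply Lemma~\ref{L:shr} to the triple $\mathbb{G}^\tau \subset \mathbb{G} \subset \tilde{\mathbb{G}}$ with $X = N$. The process $N$ is a $\tilde{\mathbb{G}}$ local martingale by Theorem~\ref{T:JY}, and its optional projection onto $\mathbb{G}^\tau$ coincides with $N$ itself (by $\mathbb{G}^\tau$-adaptedness), which is a $\mathbb{G}^\tau$ local martingale, again by Theorem~\ref{T:JY}. Lemma~\ref{L:shr} then delivers that the optional projection of $N$ onto $\mathbb{G}$ is a $\mathbb{G}$ local martingale; since $N$ is already $\mathbb{G}$-adapted, this projection is $N$, and the proof is complete. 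The only step requiring more than bookkeeping is the sandwich inclusion $\mathbb{G} \subset \tilde{\mathbb{G}}$, but this follows directly from the structural assumption imposed on $\mathbb{G}$, so I do not expect any genuine obstacle.
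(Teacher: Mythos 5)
Your proposal is correct and is essentially the paper's own argument: the paper derives Theorem~\ref{T:JYExt} by combining Theorem~\ref{T:JY} (which gives the local martingale property in both $\mathbb{G}^\tau$ and $\tilde{\mathbb{G}}$) with the F\"ollmer--Protter shrinkage Lemma~\ref{L:shr} applied to the sandwich $\mathbb{G}^\tau\subset\mathbb{G}\subset\tilde{\mathbb{G}}$, exactly as you do. Your explicit verification of the inclusion $\mathbb{G}\subset\tilde{\mathbb{G}}$ from the trace condition $\mathcal{G}_t\cap\{\tau>t\}=\mathcal{F}_t\cap\{\tau>t\}$ is a detail the paper leaves implicit, and it is handled correctly.
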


Finding the decomposition after $\tau$ is more complicated, but it can be obtained provided that it is known with respect to a suitable auxiliary filtration $\mathbb H$. More precisely, one needs that $\mathbb H$ and $\mathbb G$ coincide after $\tau$, in a certain sense. One such filtration $\mathbb H$ when $\mathbb G$ is taken to be $\mathbb{G}^{\tau}$ is, as we will see later, the initial expansion of $\mathbb F$ with $\tau$. We now make precise what it means for two filtrations to  coincide after $\tau$.

\begin{definition}
Let $\mathbb G$ and $\mathbb H$ be two filtrations such that $\mathbb G\subset\mathbb H$, and let $\tau$ be an $\mathbb H$~stopping time. Then $\mathbb G$ and $\mathbb H$ are said to \emph{coincide after~$\tau$} if for every $\mathbb H$~adapted process $X$, the process
$$
\nv 1_{\lc \tau,\infty\lc}(X - X_\tau)
$$
is $\mathbb G$~adapted.
\end{definition}

The following lemma establishes some basic properties of filtrations that coincide after~$\tau$.

\begin{lemma} \label{L:20}
Assume that $\mathbb G$ and $\mathbb H$ are right-continuous and coincide after $\tau$. Then
\begin{itemize}
\item[(i)] For every $\mathbb H$~stopping time $T$, $T\vee\tau$ is $\mathbb G$~stopping time. In particular, $\tau$ itself is a $\mathbb G$~stopping time.
\item[(ii)] For every $\mathbb H$~optional (predictable) process $X$, the process $\nv 1_{\lc \tau,\infty\lc}(X - X_\tau)$ is $\mathbb G$~optional (predictable).
\end{itemize}
\end{lemma}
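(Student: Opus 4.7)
My plan is to extract from the process-level coincidence hypothesis the event-level statement that, for every $t\geq 0$ and every $A\in\mathcal H_t$,
$$
A\cap\{\tau\leq t\}\in\mathcal G_t.
$$
To prove this, I would apply the hypothesis to the $\mathbb H$-adapted process $X^A_s=\nv 1_A\nv 1_{\{s\geq t\}}$, which is $\mathcal H_s$-measurable at every $s$ since $A\in\mathcal H_t\subset\mathcal H_s$ whenever $s\geq t$. Evaluating the resulting $\mathbb G$-adapted process $\nv 1_{\lc\tau,\infty\lc}(X^A-X^A_\tau)$ at $s=t$ simplifies to $\nv 1_A\nv 1_{\{\tau<t\}}$, giving $A\cap\{\tau<t\}\in\mathcal G_t$. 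Right-continuity of $\mathbb G$ then upgrades this to $A\cap\{\tau\leq t\}\in\mathcal G_t$ by writing $\{\tau\leq t\}=\bigcap_n\{\tau<t+1/n\}$.

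Part (i) now follows immediately. For any $\mathbb H$-stopping time $T$, the event $\{T\vee\tau\leq t\}$ belongs to $\mathcal H_t$ and is contained in $\{\tau\leq t\}$, so the reformulation places it in $\mathcal G_t$. Specialising to $T\equiv 0$ shows that $\tau$ itself is a $\mathbb G$-stopping time.

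For (ii) I would run a monotone class argument. Since the map $X\mapsto\nv 1_{\lc\tau,\infty\lc}(X-X_\tau)$ is linear and commutes with bounded pointwise limits, the collection of bounded $\mathbb H$-optional processes for which $\nv 1_{\lc\tau,\infty\lc}(X-X_\tau)$ is $\mathbb G$-optional is a monotone vector space, so it suffices to verify the property on generators $X=\nv 1_{\lc S,\infty\lc}$ with $S$ an $\mathbb H$-stopping time. A short calculation yields
$$
\nv 1_{\lc\tau,\infty\lc}(X-X_\tau)_s=\nv 1_{\{\tau<S\leq s\}}=\nv 1_{\lc R,\infty\lc}(s),
$$
where $R=S$ on $\{S>\tau\}$ and $R=+\infty$ elsewhere. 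Since $\tau$ and $S\vee\tau$ are both $\mathbb G$-stopping by (i), the identity $\{R\leq s\}=\{\tau<S\vee\tau\leq s\}$ combined with the standard fact that $\{\sigma<\rho\leq s\}\in\mathcal G_s$ for any two $\mathbb G$-stopping times $\sigma\leq\rho$ shows that $R$ is $\mathbb G$-stopping, whence $\nv 1_{\lc R,\infty\lc}$ is $\mathbb G$-optional. The predictable case is analogous on generators $X=\nv 1_A\nv 1_{(s,t]}$ with $A\in\mathcal H_s$; a case analysis produces
$$
\nv 1_{\lc\tau,\infty\lc}(X-X_\tau)_r=\nv 1_{A\cap\{\tau\leq s\}}\nv 1_{(s,t]}(r)-\nv 1_{A\cap\{s<\tau\leq t\}}\nv 1_{(t,\infty)}(r),
$$
and the event-level reformulation places the two indicator sets in $\mathcal G_s$ and $\mathcal G_t$ respectively, making the process $\mathbb G$-predictable.

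The main difficulty lies in the very first step: the hypothesis constrains processes only on $\lc\tau,\infty\lc$, where pre-$\tau$ information is wiped out, so extracting information about events at a fixed time $t$ requires the judicious test process $X^A$ together with right-continuity of $\mathbb G$. Once this reformulation is secured, (i) is automatic and the monotone class arguments for (ii) reduce to the routine generator computations displayed above.
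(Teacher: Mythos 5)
Your proposal is correct, but it is organized differently from the paper's argument. The paper tests the coincidence hypothesis directly on hand-picked processes: for (i) it applies it to $X=\nv 1_{\lc 0,T\vee\tau\rc}$, reads off that $\{T\vee\tau<r\}\in\mathcal G_r$ for every $r$, and then uses right-continuity of $\mathbb G$; for (ii) it applies it to the elementary c\`adl\`ag processes $h\nv 1_{\lc s,t\lc}$ with $h$ being $\mathcal H_s$-measurable, observes that the image process is c\`adl\`ag and $\mathbb G$-adapted (hence $\mathbb G$-optional), and concludes by a monotone class argument. You instead first distill the hypothesis into the event-level statement $A\cap\{\tau\leq t\}\in\mathcal G_t$ for all $A\in\mathcal H_t$ (via the test process $\nv 1_A\nv 1_{[t,\infty)}$ together with right-continuity), from which (i) is immediate, and for the optional half of (ii) you work with the stochastic-interval generators $\nv 1_{\lc S,\infty\lc}$, $S$ an $\mathbb H$~stopping time, identifying the image as $\nv 1_{\lc R,\infty\lc}$ where $R=S$ on $\{S>\tau\}$ and $R=\infty$ otherwise, and checking that $R$ is a $\mathbb G$~stopping time via (i) and the standard fact that $\{\sigma<\rho\}\cap\{\rho\leq s\}\in\mathcal G_s$. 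Both routes are sound. Yours buys a reusable event-level lemma and, in the optional case, rests on a family that canonically generates the optional $\sigma$-field, so the monotone class step is airtight; the paper's route is shorter at that step because c\`adl\`ag plus adapted gives optionality for free, but it leans on the step processes $h\nv 1_{\lc s,t\lc}$ as generators. Your predictable case is in the same spirit as the paper's, and the only omissions on your side are routine: the time-zero generators $A\times\{0\}$, $A\in\mathcal H_0$, of the predictable $\sigma$-field, and the truncation argument needed to pass from bounded to general optional or predictable processes.
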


\begin{proof}
For $(i)$, let $T$ be an $\mathbb H$~stopping time. Then $T\vee \tau$ is again an $\mathbb H$~stopping time, so $X=\nv 1_{\lc0,T\vee\tau\rc}$ is $\mathbb H$~adapted. Thus 
$$
\nv 1_{\lc \tau,\infty\lc}(r)(X_r - X_\tau) = -\1{T\vee\tau < r}
$$
is $\mathcal G_r$-measurable by hypothesis. This holds for every $r\geq 0$, so $T\vee\tau$ is a $\mathbb G$~stopping time since $\mathbb G$ is right-continuous.

For $(ii)$, let $X$ be of the form $X = h\nv 1_{\lc s,t\lc}$ for an $\mathcal H_s$-measurable random variable $h$ and fixed $0\leq s<t$. Then
$$
Y_r = \nv 1_{\lc \tau,\infty\lc}(r)(X_r-X_{\tau}) = \1{\tau\leq r} \big( h\1{s\leq r < t} - h\1{s\leq\tau<t}\big)
$$
is $\mathcal G_r$-measurable by assumption and defines a c\`adl\`ag process. Hence $Y$ is $\mathbb G$~optional, and the Monotone Class theorem implies the claim for $\mathbb H$~optional processes. The predictable case is similar.
\end{proof}

Before giving the first result on the $\mathbb G$~semimartingale decomposition of an $\mathbb F$~local martingale, under the general assumption that such a decomposition is available in some filtration $\mathbb H$ that coincides with $\mathbb G$ after~$\tau$, we provide examples of such pairs of filtrations $(\mathbb{G}, \mathbb{H})$. For the progressively expanded filtration $\mathbb{G}^{\tau}$, it turns out that the filtration $\mathbb H$ given as the initial expansion of $\mathbb F$ with $\tau$ coincides with $\mathbb G^{\tau}$ after $\tau$. Recall that the initial expansion is defined as follows.
$$
\mathcal H_t = \bigcap_{u>t} \mathcal H^0_u  \qquad \text{where}\qquad   \mathcal H^0_t=\mathcal F_t\vee\sigma(\tau).
$$

\begin{lemma}\label{L:21}
Let $\mathbb H$ be the initial expansion of $\mathbb F$ with~$\tau$. Then $\mathbb G^{\tau}$ and $\mathbb H$ coincide after $\tau$.
\end{lemma}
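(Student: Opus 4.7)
The plan is to reduce the proof to the following measure-theoretic agreement: for every $A\in\mathcal H_t$, $A\cap\{\tau\leq t\}\in\mathcal G^\tau_t$. The inclusion $\mathbb G^\tau\subset\mathbb H$ and the fact that $\tau$ is an $\mathbb H$-stopping time are immediate from $\mathcal G^{0,\tau}_u=\mathcal F_u\vee\sigma(\tau\wedge u)\subset\mathcal F_u\vee\sigma(\tau)=\mathcal H^0_u$ together with $\tau\in\mathcal H^0_0$, so only the coincidence condition itself requires work.

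Granting the agreement, here is how the proof closes. Let $X$ be an $\mathbb H$-adapted process for which $X_\tau$ is defined. The random variables $\nv 1_{\{\tau<t\}}X_t$ and $\nv 1_{\{\tau<t\}}X_\tau$ both lie in $\mathcal H_t$: the first directly, the second via $\mathcal H_\tau\cap\{\tau<t\}\subset\mathcal H_t$, using that $\tau$ is an $\mathbb H$-stopping time bounded by $t$ on $\{\tau<t\}$. Each vanishes off $\{\tau\leq t\}$, so the agreement, applied to the level sets of each, forces both to be $\mathcal G^\tau_t$-measurable, and hence so is their difference $\nv 1_{\{\tau<t\}}(X_t-X_\tau)$. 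Since this holds for every $t\geq 0$, the process $\nv 1_{\lc\tau,\infty\lc}(X-X_\tau)$ is $\mathbb G^\tau$-adapted, which is exactly the coincidence condition.

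To establish the agreement, fix $A\in\mathcal H_t$ and $u>t$; then $A\in\mathcal F_u\vee\sigma(\tau)$, so a monotone class argument starting from the $\pi$-system of rectangles $B\cap\{\tau\in I\}$ with $B\in\mathcal F_u$ and $I\in\mathcal B(\mathbb R_+)$ produces an $\mathcal F_u\otimes\mathcal B(\mathbb R_+)$-measurable function $F$ with $\nv 1_A(\omega)=F(\omega,\tau(\omega))$. On $\{\tau\leq t\}$ one has $\tau=\tau\wedge u$, which is $\mathcal G^\tau_u$-measurable directly from the definition $\mathcal G^{0,\tau}_u=\mathcal F_u\vee\sigma(\tau\wedge u)$. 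Since $\mathcal F_u\subset\mathcal G^\tau_u$, the composite $F(\cdot,\tau\wedge u)$ is $\mathcal G^\tau_u$-measurable, and it coincides with $\nv 1_A$ on $\{\tau\leq t\}$. Therefore $A\cap\{\tau\leq t\}\in\mathcal G^\tau_u$ for every $u>t$, and right-continuity of $\mathbb G^\tau$ brings this intersection down to $\mathcal G^\tau_t$. The main technical point is the existence of the representing function $F$ with product-measurability, so that one may evaluate $F$ at the $\mathcal G^\tau_u$-measurable random variable $\tau\wedge u$; the remainder is $\sigma$-algebra bookkeeping.
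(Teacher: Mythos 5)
Your proof is correct, and while it rests on the same mechanism as the paper's --- on the event that $\tau$ has already occurred, $\tau$ may be replaced by its truncation $\tau\wedge u$, which is measurable for the progressive expansion, followed by a monotone class argument --- it is organized differently. The paper verifies the defining property directly on elementary processes $X=h\nv 1_{\lc s,t\lc}$ with $h$ being $\mathcal H_s$-measurable, writing out the three terms of $\nv 1_{\lc \tau,\infty\lc}(X-X_\tau)$ and extending in $h$ by monotone class; you instead isolate the trace statement that $A\cap\{\tau\leq t\}\in\mathcal G^{\tau}_t$ for every $A\in\mathcal H_t$ (an after-$\tau$ analogue of $\mathcal G_t\cap\{\tau>t\}=\mathcal F_t\cap\{\tau>t\}$), proved via a Doob--Dynkin type representation $\nv 1_A=F(\cdot,\tau)$ with $F$ being $\mathcal F_u\otimes\mathcal B(\mathbb R_+)$-measurable, and then deduce adaptedness of $\nv 1_{\lc \tau,\infty\lc}(X-X_\tau)$ for a general process in one stroke. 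This buys a cleaner intermediate lemma that also dispatches the $X_\tau$ term uniformly, and it adapts readily to the $(\tau,X)$-expansion of Lemma~\ref{L:22}. Two small points deserve mention: your use of $\1{\tau<t}$ rather than $\1{\tau\leq t}$ is harmless only because $X_t-X_\tau=0$ on $\{\tau=t\}$, which you should state explicitly; and your claim that $\1{\tau<t}X_\tau$ is $\mathcal H_t$-measurable presupposes that $X_\tau$ is $\mathcal H_\tau$-measurable, which holds for progressively measurable (e.g.\ optional, or adapted c\`adl\`ag) processes but not for a merely adapted one --- the paper's definition and proof carry the same implicit restriction, and all subsequent applications of the lemma involve c\`adl\`ag processes, so this does not affect the result.
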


\begin{proof}
Let $X = h\nv 1_{\lc s,t\lc}$ for an $\mathcal H_s$-measurable random variable $h$ and fixed $0\leq s<t$. Then
\begin{align*}
Y_r = \nv 1_{\lc \tau,\infty\lc}(r)(X_r-X_{\tau}) &=  \1{\tau\leq r} \big( h\1{s\leq r < t} - h\1{s\leq\tau<t}\big) \\
&= h\1{s\leq r < t}\1{\tau\leq r} - h\1{s\leq\tau<t}\1{t\leq r} - h\1{s\leq\tau\leq r}\1{r< t}.
\end{align*}
It is enough to prove that the three terms on the right side are $\mathcal G^{\tau}_r$-measurable. Let us consider the first term, the other ones being similar. First let $h$ be of the form $f k(\tau)$ for some $\mathcal F_s$-measurable $f$ and Borel function $k$. Then
$$
h\1{s\leq r < t}\1{\tau\leq r} = f k(\tau) \1{s\leq r < t}\1{\tau\leq r} = f k(r\wedge \tau) \1{s\leq r < t}\1{\tau\leq r},
$$
which is $\mathcal G^{\tau}_r$-measurable. Using the Monotone Class theorem the result follows for every $\mathcal H^0_s$-measurable $h$, and finally for every $\mathcal H_s$-measurable $h$ by a standard argument.
\end{proof}

As a corollary we may give an alternative characterization of the progressively enlarged filtration~$\mathbb G^{\tau}$ as the smallest right-continuous filtration that contains~$\mathbb F$ and coincides with $\mathbb H$ after~$\tau$, where $\mathbb H$ is the initial expansion of $\mathbb F$.

\begin{corollary}
Let $\mathbb H$ be the initial expansion of $\mathbb F$ with~$\tau$. Then
$$
\mathbb G^{\tau} = \bigcap \big\{ \tilde{\mathbb G} : \mathbb F\subset\tilde{\mathbb G}\subset \mathbb H, \ \tilde{\mathbb G}\ \text{is\ right-continuous,\ and\ } \tilde{\mathbb G}\ \text{and\ }\mathbb H \text{\ coincide\ after\ } \tau\big\}
$$
\end{corollary}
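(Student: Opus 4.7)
The plan is to prove the two inclusions separately, exploiting the two preceding lemmas to do essentially all of the work.

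For the inclusion $\mathbb{G}^\tau \supset \bigcap\{\cdots\}$, I would show that $\mathbb{G}^\tau$ itself belongs to the collection of filtrations $\tilde{\mathbb{G}}$ over which the intersection is taken. By construction $\mathbb{G}^\tau$ is right-continuous and contains $\mathbb{F}$. Since $\mathbb{H}$ is the initial expansion of $\mathbb{F}$ with $\tau$, the random variable $\tau$ is $\mathcal{H}_0$-measurable, so $\tau$ is trivially an $\mathbb{H}$-stopping time; combined with $\mathbb{F}\subset\mathbb{H}$ and the right-continuity of $\mathbb{H}$, the minimality characterization of $\mathbb{G}^\tau$ yields $\mathbb{G}^\tau\subset\mathbb{H}$. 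Finally, Lemma~\ref{L:21} gives that $\mathbb{G}^\tau$ and $\mathbb{H}$ coincide after $\tau$. So $\mathbb{G}^\tau$ lies in the collection, and the intersection is contained in it.

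For the reverse inclusion $\mathbb{G}^\tau \subset \bigcap\{\cdots\}$, fix an arbitrary $\tilde{\mathbb{G}}$ from the collection; I need to show $\mathbb{G}^\tau\subset\tilde{\mathbb{G}}$. Since $\mathbb{G}^\tau$ is by definition the smallest right-continuous filtration containing $\mathbb{F}$ and making $\tau$ a stopping time, and since $\tilde{\mathbb{G}}$ is already right-continuous and already contains $\mathbb{F}$, it suffices to show that $\tau$ is a $\tilde{\mathbb{G}}$-stopping time. But $\tau$ is an $\mathbb{H}$-stopping time, and $\tilde{\mathbb{G}}$ and $\mathbb{H}$ coincide after $\tau$, so Lemma~\ref{L:20}(i) applied with $T=\tau$ (or any $\mathbb{H}$-stopping time $T\le\tau$, e.g.\ $T=0$) gives that $T\vee\tau=\tau$ is a $\tilde{\mathbb{G}}$-stopping time.

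I don't expect a substantive obstacle: the argument is a matter of unpacking the definition of the collection and of $\mathbb{G}^\tau$, and of quoting Lemmas~\ref{L:20} and~\ref{L:21}. The only point requiring any care is making sure that $\mathbb{G}^\tau\subset\mathbb{H}$ in the first inclusion, which is immediate from the minimality of $\mathbb{G}^\tau$ once we observe that $\tau$ is an $\mathbb{H}$-stopping time.
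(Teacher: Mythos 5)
Your proposal is correct and follows essentially the same route as the paper: one inclusion via Lemma~\ref{L:20}(i) (each $\tilde{\mathbb G}$ in the collection makes $\tau$ a stopping time, so the minimality of $\mathbb G^{\tau}$ gives $\mathbb G^{\tau}\subset\tilde{\mathbb G}$), the other via Lemma~\ref{L:21} ($\mathbb G^{\tau}$ itself belongs to the collection). Your explicit check that $\mathbb G^{\tau}\subset\mathbb H$ is a minor point the paper leaves implicit, but it is argued correctly.
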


\begin{proof}
To show ``$\subset$'', let $\tilde{\mathbb G}$ be an arbitrary element in the class over which we take the intersection. By Lemma~\ref{L:20}, $\tilde{\mathbb G}$ is a right-continuous filtration that makes $\tau$ a stopping time and contains~$\mathbb F$. Hence $\mathbb G^{\tau}$ is included in each $\tilde{\mathbb G}$ and thus in their intersection. For ``$\supset$'', Lemma~\ref{L:21} implies that $\mathbb G^{\tau}$ coincides with $\mathbb H$ after~$\tau$. It is right-continuous, so it is one of the filtrations we are intersecting.
\end{proof}

Let $X$ be a random variable. Consider now the filtration $\mathbb{G}^{(\tau, X)}$, which we call the \emph{$(\tau,X)$-expansion of $\mathbb F$}, given by
$$
\mathcal G^{(\tau,X)}_t = \bigcap_{u>t} \mathcal G^{0,(\tau,X)}_u  \qquad \text{where}\qquad   \mathcal G^{0,(\tau,X)}_t =\mathcal F_t \vee \sigma(\tau \wedge t) \vee \sigma(X1_{\{\tau\leq t\}}).
$$
The filtration $\mathbb{G}^{(\tau, X)}$ is the smallest right-continuous filtration containing $\mathbb{F}$, that makes $\tau$ a stopping time and such that $X$ is $\mathcal{G}^{(\tau, X)}_{\tau}$~measurable. As in Lemma \ref{L:21} it is easy to prove the following result.
\begin{lemma}\label{L:22}
Let $\mathbb{H}$ be the initial expansion of $\mathbb{F}$ with $(\tau, X)$. Then $\mathbb{G}^{(\tau, X)}$ coincides with $\mathbb{H}$ after~$\tau$. 
\end{lemma}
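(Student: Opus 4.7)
The plan is to follow the template of the proof of Lemma~\ref{L:21} almost verbatim, with the single argument $\tau$ replaced by the pair $(\tau,X)$, and with the observation that the generators $\tau\wedge r$ and $X\nv 1_{\{\tau\leq r\}}$ of $\mathcal G^{0,(\tau,X)}_r$ are precisely designed to recover $(\tau,X)$ on the event $\{\tau\leq r\}$.

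Concretely, I would start by fixing $0\leq s<t$ and an $\mathcal H_s$-measurable $h$, and set $Y = h\nv 1_{\lc s,t\lc}$. The same algebraic identity used in the proof of Lemma~\ref{L:21} gives
$$
\nv 1_{\lc \tau,\infty\lc}(r)(Y_r - Y_\tau) = h\1{s\leq r<t}\1{\tau\leq r} - h\1{s\leq\tau<t}\1{t\leq r} - h\1{s\leq\tau\leq r}\1{r<t},
$$
so it is enough to show that each of the three terms is $\mathcal G^{(\tau,X)}_r$-measurable; by symmetry I would treat only the first one. The key observation is that on $\{\tau\leq r\}$ one has $\tau\wedge r=\tau$ and $X\nv 1_{\{\tau\leq r\}}=X$. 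Hence if $h$ is of the special form $h=f\,k(\tau,X)$ with $f$ being $\mathcal F_s$-measurable and $k\colon\mathbb R^2\to\mathbb R$ Borel, then
$$
h\1{s\leq r<t}\1{\tau\leq r} = f\,k(\tau\wedge r,\,X\nv 1_{\{\tau\leq r\}})\1{s\leq r<t}\1{\tau\leq r},
$$
and the right-hand side is manifestly $\mathcal G^{0,(\tau,X)}_r$-measurable (note $s\leq r$ so $f\in\mathcal F_r$), hence $\mathcal G^{(\tau,X)}_r$-measurable.

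To promote this from product-form $h$ to a general $\mathcal H_s$-measurable $h$, I would invoke the Monotone Class theorem against the algebra generated by $\mathcal F_s$ and the coordinate functions $\tau,X$; this delivers the conclusion first for every $\mathcal H^0_s$-measurable $h$ and then, by the standard completion/right-continuity argument used at the end of the proof of Lemma~\ref{L:21}, for every $\mathcal H_s$-measurable $h$. The three terms together therefore constitute a c\`adl\`ag process that is adapted to $\mathbb G^{(\tau,X)}$, and an application of the Monotone Class theorem to $Y$ extends this to arbitrary $\mathbb H$-adapted processes.

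The only step that genuinely requires any thought is the first one, namely checking that the two generators $\tau\wedge r$ and $X\nv 1_{\{\tau\leq r\}}$ really do determine $k(\tau,X)$ on $\{\tau\leq r\}$ for arbitrary Borel $k$; once this algebraic point is in hand, the monotone-class and right-continuity steps are completely routine and identical to those in Lemma~\ref{L:21}, so I do not expect any substantial obstacle.
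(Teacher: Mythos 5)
Your proposal is correct and is exactly the argument the paper intends: the paper omits the proof of Lemma~\ref{L:22}, stating only that it is proved ``as in Lemma~\ref{L:21}'', and your adaptation (replacing $h=f\,k(\tau)$ by $h=f\,k(\tau,X)$ and using that $\tau\wedge r=\tau$ and $X\nv 1_{\{\tau\leq r\}}=X$ on $\{\tau\leq r\}$, then monotone class and right-continuity) is precisely that adaptation. The only cosmetic point is that measurability of the factor $\1{\tau\leq r}$ uses the right-continuous filtration $\mathcal G^{(\tau,X)}_r$ rather than $\mathcal G^{0,(\tau,X)}_r$, the same implicit step as in the paper's proof of Lemma~\ref{L:21}.
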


Also, $\mathbb{G}^{(\tau,X)}$ satisfies the crucial condition $\mathcal{G}^{(\tau,X)}_t\cap\{\tau>t\}=\mathcal{F}_t\cap\{\tau>t\}$ for all $t\geq 0$.
\begin{lemma}\label{L:inters}
The progressively expanded filtration $\mathbb{G}^{(\tau,X)}$ satisfies
$$
\mathcal{G}^{(\tau,X)}_t\cap\{\tau>t\}=\mathcal{F}_t\cap\{\tau>t\}
$$
for all $t\geq 0$.
\end{lemma}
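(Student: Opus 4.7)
The inclusion $\supset$ is immediate since $\mathcal{F}_t \subset \mathcal{G}^{(\tau,X)}_t$. For the reverse inclusion, my plan is first to prove the claim at each level $u$ for the non-right-continuous $\sigma$-algebra $\mathcal{G}^{0,(\tau,X)}_u$, and then to pass to the right-continuous intersection through a $\limsup$ argument.

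At the first step I would consider
$$
\mathcal{D}_u = \{A \in \mathcal{G}^{0,(\tau,X)}_u : \exists\, A' \in \mathcal{F}_u \text{ with } A \cap \{\tau > u\} = A' \cap \{\tau > u\}\},
$$
which is routinely checked to be a $\sigma$-algebra. It then suffices to verify that the generating $\pi$-system of $\mathcal{G}^{0,(\tau,X)}_u$ formed by sets $F \cap \{\tau \wedge u \in B\} \cap \{X 1_{\{\tau \leq u\}} \in C\}$, for $F \in \mathcal{F}_u$ and Borel $B, C \subset \mathbb{R}$, is contained in $\mathcal{D}_u$. The key observation is that on $\{\tau > u\}$ one has $\tau \wedge u = u$ and $X 1_{\{\tau \leq u\}} = 0$, so both $\{\tau \wedge u \in B\} \cap \{\tau > u\}$ and $\{X 1_{\{\tau \leq u\}} \in C\} \cap \{\tau > u\}$ reduce to either $\emptyset$ or $\{\tau > u\}$, depending only on whether $u \in B$ and $0 \in C$. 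The full generator therefore coincides on $\{\tau > u\}$ either with $F \cap \{\tau > u\}$ or with $\emptyset$, each of which is the trace on $\{\tau > u\}$ of an $\mathcal{F}_u$-set.

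For the right-continuous closure, fix $A \in \mathcal{G}^{(\tau,X)}_t = \bigcap_{u > t} \mathcal{G}^{0,(\tau,X)}_u$ and a sequence $u_n \downarrow t$. The previous step produces $A'_n \in \mathcal{F}_{u_n}$ with $A \cap \{\tau > u_n\} = A'_n \cap \{\tau > u_n\}$. I would then set $A_t = \limsup_n A'_n$; since $\mathcal{F}_{u_k} \subset \mathcal{F}_{u_n}$ whenever $k \geq n$, the set $A_t$ lies in $\bigcap_n \mathcal{F}_{u_n} = \mathcal{F}_{t+} = \mathcal{F}_t$ by the right-continuity of $\mathbb{F}$. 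The identity $A \cap \{\tau > t\} = A_t \cap \{\tau > t\}$ then follows because for each $\omega$ with $\tau(\omega) > t$, one has $\tau(\omega) > u_n$ for all $n$ large enough, so $1_A(\omega) = 1_{A'_n}(\omega)$ eventually, and hence $1_A(\omega) = 1_{A_t}(\omega)$. This last passage from the pre-right-continuous level to $\mathcal{G}^{(\tau,X)}_t$ is the only subtle point, and it is handled precisely by combining the monotonicity of $(\mathcal{F}_{u_n})$ along $u_n \downarrow t$ with the usual hypotheses.
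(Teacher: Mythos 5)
Your proposal is correct. The key observation is the same one the paper exploits --- on $\{\tau>u\}$ one has $\tau\wedge u=u$ and $X1_{\{\tau\leq u\}}=0$, so the extra generators contribute nothing to the trace $\sigma$-algebra --- but the execution differs in two ways. The paper argues at the level of bounded functionals $H_t=Y_t h(X1_{\{\tau\leq t\}})$ and leans on the ``well known'' identity $\mathcal G^{\tau}_t\cap\{\tau>t\}=\mathcal F_t\cap\{\tau>t\}$ for the single-time progressive expansion, finishing with the Monotone Class Theorem; in particular the passage from the non-right-continuous $\sigma$-algebras $\mathcal G^{0,(\tau,X)}_u$ to the right-continuous intersection $\mathcal G^{(\tau,X)}_t=\bigcap_{u>t}\mathcal G^{0,(\tau,X)}_u$ is left implicit. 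You instead work at the level of sets with a $\pi$-system/Dynkin argument directly on the generators $F\cap\{\tau\wedge u\in B\}\cap\{X1_{\{\tau\leq u\}}\in C\}$, which makes the proof self-contained (it reproves the single-time trace identity rather than citing it), and you then handle the right-continuity explicitly via $A_t=\limsup_n A'_n$ with $u_n\downarrow t$, using $\bigcap_n\mathcal F_{u_n}=\mathcal F_t$. That last step is carried out correctly (the tail unions $\bigcup_{k\geq N}A'_k$ lie in $\mathcal F_{u_N}$, and pointwise on $\{\tau>t\}$ one eventually has $1_A=1_{A'_n}$), and it is precisely the point the paper's terse proof glosses over; the trade-off is that your argument is longer, while the paper's is shorter by recycling the known $\mathbb G^{\tau}$ result.
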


\begin{proof}
It is well known that $\mathcal{G}^{\tau}_t\cap\{\tau>t\}=\mathcal{F}_t\cap\{\tau>t\}$. Let $H_t = Y_th(X1_{\{\tau\leq t\}})$, where $Y_t$ is $\mathcal{F}_t$-measurable and bounded, and $h$ is a bounded Borel function. Then $H_t1_{\{\tau>t\}}=Y_th(0)1_{\{\tau>t\}}$ which is measurable with
respect to $\{\tau>t\}\cap\mathcal{G}^{\tau}_t=\{\tau>t\}\cap\mathcal{F}_t$. The Monotone Class Theorem now proves that
$\mathcal{G}^{(\tau,X)}_t\cap\{\tau>t\} \subset \mathcal{F}_t\cap\{\tau>t\}$. The reverse inclusion is clear.
\end{proof}

We are now ready to give the first result on the $\mathbb G$~semimartingale decomposition of an $\mathbb F$~local martingale, under the general assumption that such a decomposition is available in some filtration $\mathbb H$ that coincides with $\mathbb G$ after~$\tau$.

\begin{theorem} \label{T:gen}
Let $M$ be an $\mathbb F$~local martingale. Let $\mathbb G$ be any progressive expansion of $\mathbb F$ with $\tau$ satisfying $\mathcal{G}_t\cap\{\tau>t\} = \mathcal{F}_t\cap\{\tau>t\}$ for all $t\geq 0$ and let $\mathbb H$~be a filtration that coincides with $\mathbb G$ after~$\tau$. Suppose there exists an $\mathbb H$~predictable finite variation process $A$ such that $M-A$ is an $\mathbb H$~local martingale. Then $M$ is a $\mathbb G$~semimartingale, and
$$
M_t -  \int_0^{t\wedge\tau} \frac{d\langle M,\mu \rangle_s + dJ_s}{Z_{s-}} - \int_{t\wedge\tau}^t dA_s
$$
is the local martingale part of its $\mathbb G$~decomposition. Here $Z$, $\mu$ and $J$ are defined as in Theorem~\ref{T:JY}.
\end{theorem}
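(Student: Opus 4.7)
My plan is to show the process
$$
L_t := M_t - \int_0^{t\wedge\tau} \frac{d\langle M,\mu\rangle_s + dJ_s}{Z_{s-}} - \int_{t\wedge\tau}^{t} dA_s
$$
is a $\mathbb{G}$~local martingale by splitting $L = L^\tau + (L - L^\tau)$ and treating each piece separately. The stopped process $L^\tau_t = M_{t\wedge\tau} - \int_0^{t\wedge\tau} \frac{d\langle M,\mu\rangle_s + dJ_s}{Z_{s-}}$ is exactly the process of Theorem~\ref{T:JYExt}, whose hypotheses hold since $\mathbb{G}$ satisfies the intersection condition, so it is already known to be a $\mathbb{G}$~local martingale. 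All the remaining work concerns the complementary piece.

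A direct computation gives
$$
L_t - L^\tau_t = (M-A)_t - (M-A)_{t\wedge\tau} = \nv 1_{\lc\tau,\infty\lc}(t)\bigl((M-A)_t - (M-A)_\tau\bigr),
$$
which I denote by $N$. Since $M-A$ is an $\mathbb{H}$~local martingale by hypothesis and stopping at the $\mathbb{H}$~stopping time $\tau$ preserves the local martingale property, $N$ is an $\mathbb{H}$~local martingale. The second expression for $N$, combined with Lemma~\ref{L:20}(ii) applied to the $\mathbb{H}$~optional process $M-A$, shows that $N$ is $\mathbb{G}$~optional and hence $\mathbb{G}$~adapted.

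The remaining step is to upgrade $N$ from a $\mathbb{G}$~adapted $\mathbb{H}$~local martingale to a genuine $\mathbb{G}$~local martingale. If $N$ were a uniformly integrable $\mathbb{H}$~martingale this would be immediate from the tower property, since $E[N_t \mid \mathcal{G}_s] = E\bigl[E[N_t \mid \mathcal{H}_s] \bigm| \mathcal{G}_s\bigr] = E[N_s \mid \mathcal{G}_s] = N_s$, using $\mathbb{G}$~adaptedness in the last equality. For the local case I localize by the $\mathbb{G}$~stopping times $R_k := \inf\{t : |N_t| \geq k\}$, which are $\mathbb{G}$~stopping times because $N$ is $\mathbb{G}$~adapted and c\`adl\`ag. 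I expect the main technical obstacle to lie exactly here: ensuring that $N^{R_k}$ is a bona fide uniformly integrable $\mathbb{H}$~martingale, since the jump $\Delta N_{R_k}$ need not be bounded, which likely requires intersecting $R_k$ with an $\mathbb{H}$~localizing sequence for $N$. An alternative route is to invoke the shrinkage Lemma~\ref{L:shr} with the triple $(\mathbb{F},\mathbb{G},\mathbb{H})$ and the $\mathbb{H}$~local martingale $N$, but that demands a separate identification of the $\mathbb{F}$~optional projection of $N$, which is not obviously a local martingale at this level of generality. Once the upgrade is done, summing $L^\tau + N$ yields the decomposition claimed in the theorem.
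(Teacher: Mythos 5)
Your decomposition is exactly the paper's: the stopped piece is handled by Theorem~\ref{T:JYExt}, and the after-$\tau$ piece $N_t=\nv 1_{\lc\tau,\infty\lc}(t)\bigl((M-A)_t-(M-A)_\tau\bigr)$ is an $\mathbb H$~local martingale that is $\mathbb G$~adapted by the coincidence hypothesis. The genuine gap is the step you yourself flag: upgrading $N$ to a $\mathbb G$~local martingale. Your hitting times $R_k=\inf\{t:|N_t|\geq k\}$ are $\mathbb G$~stopping times, but $N^{R_k}$ is only an $\mathbb H$~local martingale dominated by $k+|\Delta N_{R_k}|$, and without integrability of that jump it need not be a martingale, so the tower argument cannot start. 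Your proposed repair --- intersecting $R_k$ with an $\mathbb H$~localizing sequence $(T_k)$ --- does fix the uniform integrability, but it destroys precisely what the tower argument needs: $R_k\wedge T_k$ is only an $\mathbb H$~stopping time, so $N^{R_k\wedge T_k}$ is no longer $\mathbb G$~adapted and $E\bigl[N_{s\wedge R_k\wedge T_k}\mid\mathcal G_s\bigr]$ cannot be identified with $N_{s\wedge R_k\wedge T_k}$; moreover, even if it could, a localizing sequence for a $\mathbb G$~local martingale must consist of $\mathbb G$~stopping times, which $R_k\wedge T_k$ are not. So as written the proof is incomplete at its central point.

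The missing idea is to exploit the specific structure of $N$, namely that it vanishes on $\lc 0,\tau\rc$. Take any $\mathbb H$~reducing sequence $(T_n)$ for $M-A$ and set $T_n'=T_n\vee\tau$. A short case check shows $N_{\cdot\wedge T_n'}=N_{\cdot\wedge T_n}$ (on $\{T_n<\tau\}$ both stopped processes are identically zero), so the $T_n'$ still reduce $N$ in $\mathbb H$; and by Lemma~\ref{L:20}~$(i)$, which is available exactly because $\mathbb G$ and $\mathbb H$ coincide after $\tau$, each $T_n'$ is a $\mathbb G$~stopping time, with $T_n'\uparrow\infty$. Then $N_{\cdot\wedge T_n'}$ is an $\mathbb H$~martingale that is $\mathbb G$~adapted, hence a $\mathbb G$~martingale by your tower computation, and $N$ is a $\mathbb G$~local martingale. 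This is precisely how the paper closes the argument. One further small omission: to call the displayed process the local martingale part of the $\mathbb G$~decomposition you should also note that $\int_{t\wedge\tau}^t dA_s$ is $\mathbb G$~predictable of finite variation, which follows from the predictable half of Lemma~\ref{L:20}~$(ii)$.
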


\begin{proof}
The process $M^{\mathbb G}_t = M_{t\wedge\tau} - \int_0^{t\wedge\tau} \frac{d\langle M,\mu \rangle_s + dJ_s}{Z_{s-}}$ is a $\mathbb G$~local martingale by the Jeulin-Yor theorem (Theorem~\ref{T:JYExt}). Next, define
$$
M^{\mathbb H} = \nv 1_{\lc\tau,\infty\lc}(\tilde M - \tilde M_\tau),
$$
where $\tilde M_t = M_t-A_t$. Since $\tilde M$ is an $\mathbb H$~local martingale, $M^{\mathbb H}$ is also. Moreover, if $(T_n)_{n\geq 1}$ is a sequence of $\mathbb H$~stopping times that reduce $\tilde M$, then $T_n' = T_n\vee\tau$ yields a reducing sequence for $M^{\mathbb H}$. Lemma~\ref{L:20}~$(i)$ shows that the $T_n'$ are in fact $\mathbb G$~stopping times, and since $\mathbb G$ and $\mathbb H$ coincide after $\tau$, $M^{\mathbb H}$ is $\mathbb G$~adapted. This implies that $M^{\mathbb H}_{\cdot\wedge T_n'}$ is an $\mathbb H$~martingale that is $\mathbb G$~adapted, and is therefore a $\mathbb G$~martingale. It follows that $M^{\mathbb H}$ is a $\mathbb G$~local martingale. It now only remains to observe that
$$
M^{\mathbb G}_t + M^{\mathbb H}_t = M_t - \int_0^{t\wedge\tau} \frac{d\langle M,\mu \rangle_s + dJ_s}{Z_{s-}} - \int_{t\wedge\tau}^t dA_s,
$$
which thus is a $\mathbb G$~local martingale. Finally, by Lemma~\ref{L:20}~$(ii)$, the last term is $\mathbb G$~predictable, so we obtain indeed the $\mathbb G$~semimartingale decomposition.
\end{proof}

Part of the proof of Theorem~\ref{T:gen} can be viewed as a statement about filtration shrinkage. According to a result by F\"ollmer and Protter~\cite{Follmer/Protter:2010}, if $\mathbb G\subset\mathbb H$ are two nested filtrations and $L$ is an $\mathbb H$~local martingale that can be reduced using $\mathbb G$~stopping times, then its optional projection onto~$\mathbb G$ is again a local martingale. In our case $L$ corresponds to $M^{\mathbb H}$, which is $\mathbb G$ adapted and hence coincides with its optional projection.

We now proceed to examine two particular situations where $\mathbb G$ and $\mathbb H$ coincide after $\tau$, and where the $\mathbb H$~decomposition $M-A$ is available. First we make an absolute continuity assumption on the $\mathcal F_t$~conditional laws of~$\tau$ or $(\tau, X)$, known as Jacod's criterion. We then assume that $\mathbb F$ is a Wiener filtration, and impose a condition related to the Malliavin derivatives of the process of $\mathcal F_t$~conditional distributions. This is based on theory developed by Imkeller, Pontier and Weisz~\cite{Imkeller_etal:2001} and Imkeller~\cite{Imkeller:2003}.

\subsection{Jacod's criterion}

In this section we study the case where $\tau$ or $(\tau, X)$ satisfy \emph{Jacod's criterion}, which we now recall. Let $\xi$ be a random variable. We state Jacod's criterion for the case where $\xi$ takes values in $\mathbb{R}^d$. In this subsection, $\mathbb H$ will always denote the initial expansion of $\mathbb F$ with $\xi$. Our results will be obtained with $\xi$ being either $\tau$ or $(\tau, X)$ and in both cases $\mathbb{H}$ indeed coincides with $\mathbb G^{\xi}$ after $\tau$ by Lemma~\ref{L:21} and Lemma~\ref{L:22}.

\begin{assumption}[Jacod's criterion]\label{A:J}
There exists a $\sigma$-finite measure $\eta$ on $\mathcal B(\mathbb{R}^d)$ such that $P(\xi \in \cdot \mid\mathcal F_t)(\omega) \ll \eta(\cdot)$ a.s.
\end{assumption}

Without loss of generality, $\eta$ may be chosen as the law of $\xi$. Under Assumption~\ref{A:J}, the $\mathcal F_t$-conditional density
$$
p_t(u;\omega) = \frac{P(\xi \in du \mid\mathcal F_t)(\omega)}{\eta(du)}
$$
exists, and can be chosen so that $(u,\omega,t)\mapsto p_t(u;\omega)$ is c\`adl\`ag in $t$ and measurable for the optional $\sigma$-field associated with the filtration $\widehat{\mathbb F}$ given by $\widehat{\mathcal F}_t=\cap_{u>t} \mathcal B(\mathbb{R}^d)\otimes \mathcal F_u$. See Lemma~1.8 in~\cite{Jacod:1987}. 

\begin{theorem}
Let $M$ be an $\mathbb F$~local martingale. 
\begin{itemize}
	\item[(i)] If $\tau$ satisfies Jacod's Criterion (Assumption~\ref{A:J}), then $M$ is a $\mathbb G^{\tau}$~semimartingale. 
	\item[(ii)] Let $X$ be a random variable such that $(\tau, X)$ satisfies Jacod's Criterion (Assumption~\ref{A:J}), then $M$ is a $\mathbb G^{(\tau, X)}$~semimartingale.
\end{itemize}
\end{theorem}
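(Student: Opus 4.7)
The plan is to reduce both statements to a direct application of Theorem~\ref{T:gen}, feeding in the semimartingale decomposition that is known in the \emph{initially} expanded filtration under Jacod's criterion. In both cases we take $\mathbb{H}$ to be the initial expansion of $\mathbb{F}$ with $\xi$, where $\xi=\tau$ in part $(i)$ and $\xi=(\tau,X)$ in part $(ii)$.

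First I would verify that the structural hypotheses of Theorem~\ref{T:gen} are in place. For $(i)$, the intersection condition $\mathcal{G}^{\tau}_t\cap\{\tau>t\}=\mathcal{F}_t\cap\{\tau>t\}$ is classical, and Lemma~\ref{L:21} says that $\mathbb{H}$ coincides with $\mathbb{G}^{\tau}$ after~$\tau$. For $(ii)$, the corresponding intersection condition is precisely Lemma~\ref{L:inters}, and Lemma~\ref{L:22} says that $\mathbb{H}$ coincides with $\mathbb{G}^{(\tau,X)}$ after~$\tau$. Thus in both cases we are in the setup of Theorem~\ref{T:gen}, and the only remaining input is an $\mathbb{H}$-semimartingale decomposition of $M$.

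Next I would invoke Jacod's theorem (Th\'eor\`eme~2.5 in~\cite{Jacod:1987}): under Assumption~\ref{A:J} for $\xi$, the $\mathbb{F}$~local martingale $M$ is an $\mathbb{H}$~semimartingale with canonical decomposition $M = \tilde M + A^{\xi}$, where $\tilde M$ is an $\mathbb{H}$~local martingale and
$$
A^{\xi}_t = \int_0^t \frac{d\langle p(u;\cdot),M\rangle^{\mathbb F}_s}{p_{s-}(u;\cdot)}\bigg|_{u=\xi}
$$
is an $\mathbb{H}$-predictable finite variation process, built from the conditional density $p_t(u;\omega)$ provided by Assumption~\ref{A:J}. (Here $\langle p(u;\cdot),M\rangle^{\mathbb F}$ is computed in $\mathbb F$ for fixed $u$ and then evaluated at $u=\xi$; the $\widehat{\mathbb F}$-optional regularity of $p$ makes the resulting process $\mathbb H$-adapted.) This is exactly the hypothesis ``there exists an $\mathbb{H}$~predictable finite variation process $A$ such that $M-A$ is an $\mathbb{H}$~local martingale'' required by Theorem~\ref{T:gen}.

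With all hypotheses verified, Theorem~\ref{T:gen} immediately yields that $M$ is a $\mathbb{G}$-semimartingale (with $\mathbb G=\mathbb G^{\tau}$ for $(i)$ and $\mathbb G=\mathbb G^{(\tau,X)}$ for $(ii)$) and produces the explicit decomposition
$$
M_t - \int_0^{t\wedge\tau}\frac{d\langle M,\mu\rangle_s + dJ_s}{Z_{s-}} - \int_{t\wedge\tau}^{t} dA^{\xi}_s.
$$
The only subtlety worth flagging is the conventional one when invoking Jacod: one must pick a ``good'' jointly measurable version of $p_t(u;\omega)$ so that the stochastic integral above is well-defined and the composed process is $\mathbb{H}$-predictable of finite variation; this is precisely the content of Lemma~1.8 of~\cite{Jacod:1987} recalled in the paragraph preceding the theorem. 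Apart from this bookkeeping, the proof is a direct composition of Jacod's theorem with Theorem~\ref{T:gen}, with no genuine obstacle.
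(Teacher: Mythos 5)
Your argument is correct, but it takes a different (and heavier) route than the paper. The paper's own proof is two lines: under Assumption~\ref{A:J}, Jacod's theorem makes $M$ a semimartingale in the initial expansion $\mathbb H$, and since $M$ is adapted to the smaller filtration $\mathbb G^{\tau}\subset\mathbb H$ (resp.\ $\mathbb G^{(\tau,X)}\subset\mathbb H$), Stricker's theorem immediately gives that $M$ is a semimartingale there too --- no intersection condition, no coincidence-after-$\tau$ lemma, and no Jeulin--Yor input are needed for the bare semimartingale property. You instead verify the full hypotheses of Theorem~\ref{T:gen} (via Lemmas~\ref{L:21}, \ref{L:22}, \ref{L:inters}) and feed in the $\mathbb H$-decomposition from Jacod's theorem; this is valid and buys more, namely the explicit $\mathbb G$-decomposition, but that is exactly the content of the two corollaries the paper states right after this theorem, so for the statement at hand it is more machinery than required. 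One small point of care: you quote the finite variation part as $\int_0^t d\langle p(u),M\rangle_s/p_{s-}(u)\big|_{u=\xi}$, which as the paper notes is only a rewriting valid ``whenever the right side makes sense''; the hypothesis of Theorem~\ref{T:gen} should be certified with the form actually provided by Theorem~\ref{T:J}(iii), namely $A_t=\int_0^t k_s(\xi)\,dA_s$, which is $\mathbb H$-predictable of finite variation by construction. With that adjustment your proof is complete.
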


\begin{proof}
We prove (i). Let $\mathbb H^{\tau}$ be the initial expansion of F with $\tau$. It follows from Jacod's theorem (see Theorems VI.10 and VI.11 in [7]) that $M$ is an $\mathbb H^{\tau}$ semimartingale, which is $\mathbb G^{\tau}$ adapted. It is also a $\mathbb G^{\tau}$ semimartingale by Stricker's theorem. The proof of (ii) is similar.
\end{proof}

We provide the explicit decompositions using the following classical result by Jacod, see~\cite{Jacod:1987}, Theorem~2.5.

\begin{theorem}\label{T:J}
Let $M$ be an $\mathbb F$~local martingale, and assume Assumption~\ref{A:J} is satisfied. Then there exists set $B\in \mathcal B(\mathbb R^d)$, with $\eta(B)=0$, such that
\begin{itemize}
\item[(i)] $\langle p(u), M\rangle$ exists on $\{p_-(u)>0\}$ for every $u\notin B$,
\item[(ii)] there is an increasing predictable process $A$ and an $\widehat{\mathbb F}$~predictable function $k_t(u;\omega)$ such that for every $u\notin B$, $\langle p(u), M \rangle_t = \int_0^t k_s(u) p_{s-}(u) dA_s$ on $\{p_-(u)>0\}$,
\item[(iii)] $\int_0^t | k_s(\xi) | dA_s < \infty$ a.s.~for every $t\geq 0$ and $M_t - \int_0^t k_s(\xi) dA_s$ is an $\mathbb H$~local martingale.
\end{itemize}
\end{theorem}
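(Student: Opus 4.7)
The approach I would take follows the classical strategy of Jacod, which combines the parametric martingale structure of the conditional density with a measurable selection argument and a test-function characterization of $\mathbb H$-local martingales.

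The starting point is the regularized conditional density $p_t(u;\omega)$, chosen càdlàg in $t$ and $\widehat{\mathbb F}$-optional (this is Lemma~1.8 of \cite{Jacod:1987}). The key structural fact is that for $\eta$-almost every $u$, the process $p(u)=(p_t(u))_{t\geq 0}$ is an $\mathbb F$-martingale. Gathering the exceptional $u$'s into a single $\eta$-null set $B_0$, part~(i) follows from standard localization: on the open set $\{p_-(u)>0\}$ both $p(u)$ and $M$ are local martingales (after a suitable reducing sequence), so $\langle p(u),M\rangle$ is well-defined.

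For part~(ii), I would use a parametric Kunita--Watanabe argument. Choose $A$ as a predictable increasing process dominating the Doléans measures of all the $\langle p(u),M\rangle/p_-(u)$ simultaneously; a natural candidate is the dual predictable projection of a single increasing process that dominates each of the cross variations. A measurable section theorem (applied in the product space indexed by $u$) then yields a jointly $\widehat{\mathbb F}$-predictable density $k_t(u;\omega)$ with $\langle p(u),M\rangle_t=\int_0^t k_s(u)p_{s-}(u)dA_s$ on $\{p_-(u)>0\}$ for all $u$ outside an enlarged $\eta$-null set $B\supset B_0$.

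The heart of the proof, and the main obstacle, is part~(iii). My plan is to verify the martingale property of $\tilde M_t := M_t - \int_0^t k_s(\xi)dA_s$ in $\mathbb H$ via the test-function characterization: after a suitable localization, it suffices to show that for every bounded $\mathcal F_s$-measurable $H$ and bounded Borel $g$,
$$
E\bigl[Hg(\xi)(\tilde M_t-\tilde M_s)\bigr]=0.
$$
Using the defining property of the conditional density, one writes
$$
E[Hg(\xi)M_t] = \int g(u)\,E[Hp_t(u)M_t]\,\eta(du),
$$
and since both $p(u)$ and $M$ are $\mathbb F$-martingales, the product $Mp(u)-\langle M,p(u)\rangle$ is a local martingale, so
$$
E[Hp_t(u)M_t]-E[Hp_s(u)M_s]=E\!\left[H\int_s^t k_r(u)p_{r-}(u)dA_r\right]
$$
on $\{p_-(u)>0\}$. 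Integrating against $g(u)\,\eta(du)$ and applying Fubini, the right-hand side becomes $E[H\int_s^t g(\xi)k_r(\xi)dA_r]$ once one recognizes $\int g(u)f(u,\omega)p_{r-}(u)\eta(du)=E[g(\xi)f(r,\xi)\mid\mathcal F_{r-}]$ for $\widehat{\mathbb F}$-predictable integrands $f$. The delicate points will be (a) justifying all the exchanges of expectation and integration (handled by reducing to bounded $k$ via $\{|k|\leq n\}\cap\{p_-(\xi)>1/n\}$ and using dominated convergence) and (b) extending the identity from $\{p_-(\xi)>0\}$ to the full probability space, which holds because $P(\{p_-(\xi)=0\})=0$ by a Fubini argument against $\eta$. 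The a.s.\ finiteness $\int_0^t|k_s(\xi)|dA_s<\infty$ drops out from the same computation with $g\equiv 1$ and $|k|$ in place of $k$ after the localization above.
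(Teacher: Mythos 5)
The paper itself offers no proof of this statement: it is quoted verbatim as a classical result (Théorème 2.5 of Jacod~\cite{Jacod:1987}), so there is no in-paper argument to compare yours against. Your sketch follows a legitimate and well-known route --- the test-function verification of the $\mathbb H$-martingale property via $E[Hg(\xi)(\tilde M_t-\tilde M_s)]=0$ is essentially how the result is reproved in the ``density hypothesis'' literature --- but it differs from Jacod's own argument, which works on the product space $\mathbb R^d\times\Omega$ with the filtration $\widehat{\mathbb F}$, compares the measure $\eta\otimes P$ with the joint law of $(\xi,\omega)$, and obtains (i)--(iii) in one stroke as a Girsanov-type theorem for the locally absolutely continuous change of measure whose density process is $p$; in that setting the joint measurability of $k$ and the existence of the single increasing process $A$ come for free from the one bracket computed on the product space, rather than from a separate measurable-selection step.

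The genuine gap is in your treatment of (i): the claim that $\langle p(u),M\rangle$ is well-defined ``since both $p(u)$ and $M$ are local martingales (after a suitable reducing sequence)'' is not valid. For two local martingales the quadratic covariation $[p(u),M]$ always exists, but the predictable bracket $\langle p(u),M\rangle$ exists only if $[p(u),M]$ has locally integrable variation, which can fail; establishing this local integrability on $\{p_-(u)>0\}$ is precisely the nontrivial content of part (i), and in Jacod's proof it comes out of the Girsanov/Jacod--M\'emin machinery for the absolutely continuous change of measure, not from localization alone. Relatedly, in (ii) the choice of a single $u$-independent $A$ dominating all the measures $d\langle p(u),M\rangle$ needs an actual construction (Kunita--Watanabe gives domination by $\langle M,M\rangle+\langle p(u),p(u)\rangle$, which still depends on $u$), and in (iii) the a.s.\ finiteness of $\int_0^t|k_s(\xi)|\,dA_s$ does not simply ``drop out'' by taking $g\equiv1$: one must first show that $p_{s-}(\xi)>0$ holds $dA_s\otimes dP$-a.e.\ and then pass to the limit through the truncations, which is where the real work in part (iii) lies. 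Your outline of the test-function computation is sound, but these three points are the substance of the theorem and would need to be supplied for the proof to be complete.
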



Immediate consequences of Theorem~\ref{T:gen} and Theorem~\ref{T:J} are the following corollaries.

\begin{corollary}
Let $M$ be an $\mathbb F$~local martingale, and assume that $\tau$ satisfies Assumption~\ref{A:J}. Then $M$ is a $\mathbb G^{\tau}$~semimartingale, and
$$
M_t -  \int_0^{t\wedge\tau} \frac{d\langle M,\mu \rangle_s + dJ_s}{Z_{s-}} - \int_{t\wedge\tau}^t k_s(\tau)dA_s
$$
is the local martingale part of its $\mathbb G^{\tau}$~decomposition. Here $Z$, $\mu$ and $J$ are defined as in Theorem~\ref{T:JY}, and $k$ and $A$ as in Theorem~\ref{T:J} with $d=1$ and $\xi=\tau$.
\end{corollary}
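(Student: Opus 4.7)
The plan is to apply Theorem \ref{T:gen} with $\mathbb{G}=\mathbb{G}^{\tau}$ and $\mathbb{H}$ taken to be the initial expansion of $\mathbb{F}$ with $\tau$, so that everything reduces to checking that the hypotheses of Theorem~\ref{T:gen} are met and then substituting the explicit decomposition from Theorem~\ref{T:J}.

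First I would verify the two filtration-level hypotheses. The fact that $\mathbb{G}^{\tau}$ is a progressive expansion with $\mathcal{G}^{\tau}_t\cap\{\tau>t\}=\mathcal{F}_t\cap\{\tau>t\}$ is the classical property recalled in the proof of Lemma~\ref{L:inters}. That $\mathbb{H}$ coincides with $\mathbb{G}^{\tau}$ after $\tau$ is exactly Lemma~\ref{L:21}. So both of these are immediate.

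Next I would extract the $\mathbb{H}$-decomposition from Jacod's Theorem~\ref{T:J}: under Assumption~\ref{A:J} for $\xi=\tau$, the process
$$
A'_t=\int_0^t k_s(\tau)\,dA_s
$$
is well-defined (the integrability $\int_0^t|k_s(\tau)|\,dA_s<\infty$ a.s.\ is part of $(iii)$), has finite variation, and is $\mathbb{H}$-predictable because $A$ is $\mathbb{F}$-predictable (hence $\mathbb{H}$-predictable) and $k_s(\tau)$ is $\mathbb{H}$-predictable as the composition of the $\widehat{\mathbb{F}}$-predictable function $k$ with the $\mathcal{H}_0$-measurable random variable $\tau$. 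Moreover, $M_t-A'_t$ is an $\mathbb{H}$-local martingale by part $(iii)$ of Theorem~\ref{T:J}. This gives exactly the input needed by Theorem~\ref{T:gen}.

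Applying Theorem~\ref{T:gen} with this $A=A'$, $M$ is a $\mathbb{G}^{\tau}$-semimartingale with local martingale part
$$
M_t - \int_0^{t\wedge\tau}\frac{d\langle M,\mu\rangle_s + dJ_s}{Z_{s-}} - \int_{t\wedge\tau}^{t} k_s(\tau)\,dA_s,
$$
which is the claimed decomposition. There is essentially no obstacle here beyond bookkeeping: the conceptual content is already in Theorem~\ref{T:gen}, Theorem~\ref{T:J} and Lemma~\ref{L:21}, and the only thing to watch is the predictability and integrability of $k_s(\tau)\,dA_s$, which is furnished directly by Jacod's theorem.
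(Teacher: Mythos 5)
Your proposal is correct and follows exactly the route the paper intends: the paper presents this corollary as an immediate consequence of Theorem~\ref{T:gen} and Theorem~\ref{T:J}, with the coincidence after $\tau$ supplied by Lemma~\ref{L:21}, which is precisely your argument. The extra details you supply (predictability and a.s.\ integrability of $\int_0^t k_s(\tau)\,dA_s$ from part $(iii)$ of Jacod's theorem, and the classical identity $\mathcal{G}^{\tau}_t\cap\{\tau>t\}=\mathcal{F}_t\cap\{\tau>t\}$) are the correct bookkeeping the paper leaves implicit.
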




Notice that this recovers the main result in~\cite{Jeanblanc/LeCam:2009} (Theorem 3.1), since by Theorem~\ref{T:J}~$(ii)$ we may write
$$
\int_{t\wedge\tau}^t k_s(\tau) dA_s =  \int_{t\wedge\tau}^t \frac{d\langle p(u),M\rangle_s}{p_{s-}(u)} \bigg|_{u=\tau},
$$
whenever the right side makes sense. See~\cite{Jeanblanc/LeCam:2009} for a detailed discussion.


\begin{corollary}
Let $M$ be an $\mathbb F$~local martingale. Let $X$ be a random variable and assume that $(\tau, X)$ satisfies Assumption~\ref{A:J}. Then $M$ is a $\mathbb G^{(\tau, X)}$~semimartingale, and
$$
M_t -  \int_0^{t\wedge\tau} \frac{d\langle M,\mu \rangle_s + dJ_s}{Z_{s-}} - \int_{t\wedge\tau}^t k_s(\tau,X)dA_s
$$
is the local martingale part of its $\mathbb G^{(\tau,X)}$~decomposition. Here $Z$, $\mu$ and $J$ are defined as in Theorem~\ref{T:JY}, and $k$ and $A$ as in Theorem~\ref{T:J} with $d=2$ and $\xi=(\tau,X)$.
\end{corollary}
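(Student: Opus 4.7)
The plan is to apply Theorem~\ref{T:gen} with the filtrations $\mathbb G = \mathbb G^{(\tau,X)}$ and $\mathbb H$ equal to the initial expansion of $\mathbb F$ with the pair $(\tau,X)$, using the $\mathbb H$~semimartingale decomposition supplied by Jacod's Theorem~\ref{T:J} with $d=2$ and $\xi=(\tau,X)$. So the work is mostly bookkeeping: check that every hypothesis of Theorem~\ref{T:gen} is met, read off the finite variation process, and transcribe the conclusion.

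First I would verify the structural hypotheses on the pair $(\mathbb G^{(\tau,X)},\mathbb H)$. By Lemma~\ref{L:22}, $\mathbb G^{(\tau,X)}$ coincides with $\mathbb H$ after $\tau$, and by Lemma~\ref{L:inters}, the progressively expanded filtration $\mathbb G^{(\tau,X)}$ satisfies the crucial identity $\mathcal G^{(\tau,X)}_t\cap\{\tau>t\}=\mathcal F_t\cap\{\tau>t\}$ for all $t\geq 0$. Thus $\mathbb G^{(\tau,X)}$ is a legitimate choice of $\mathbb G$ in the setting of Theorem~\ref{T:gen}, and $\mathbb H$ is an admissible auxiliary filtration.

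Next I would invoke Theorem~\ref{T:J} applied to $\xi=(\tau,X)$, which under Assumption~\ref{A:J} supplies an $\widehat{\mathbb F}$~predictable function $k_t(u;\omega)$ and an increasing $\mathbb F$~predictable process $A$ such that the process
$$
\widetilde M_t = M_t - \int_0^t k_s(\tau,X)\,dA_s
$$
is an $\mathbb H$~local martingale, with $\int_0^t |k_s(\tau,X)|\,dA_s<\infty$ a.s. In particular the integral $C_t=\int_0^t k_s(\tau,X)\,dA_s$ is an $\mathbb H$~predictable finite variation process, and $M-C$ is an $\mathbb H$~local martingale, which is exactly the hypothesis required by Theorem~\ref{T:gen}.

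Finally, I would simply plug into Theorem~\ref{T:gen}: it asserts that $M$ is a $\mathbb G^{(\tau,X)}$~semimartingale with local martingale part
$$
M_t - \int_0^{t\wedge\tau} \frac{d\langle M,\mu\rangle_s+dJ_s}{Z_{s-}} - \int_{t\wedge\tau}^t dC_s,
$$
and substituting $dC_s=k_s(\tau,X)\,dA_s$ yields the stated decomposition. There is no real obstacle here; if anything requires care it is only the verification that $C$ is $\mathbb H$~predictable and of finite variation, but these properties follow immediately from the predictability of $A$ and of $k$ together with the integrability statement in Theorem~\ref{T:J}~$(iii)$. The proof of~(ii) is thus a direct specialization of the apparatus already built up, entirely parallel to the single-time corollary stated just above.
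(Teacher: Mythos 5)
Your proposal is correct and matches the paper's intended argument: the paper presents this corollary as an immediate consequence of Theorem~\ref{T:gen} combined with Theorem~\ref{T:J} (with $d=2$, $\xi=(\tau,X)$), the applicability of Theorem~\ref{T:gen} being guaranteed exactly by Lemma~\ref{L:22} and Lemma~\ref{L:inters} as you check. Nothing is missing; the verification that $\int_0^\cdot k_s(\tau,X)\,dA_s$ is $\mathbb H$~predictable of finite variation follows from Theorem~\ref{T:J}~$(iii)$ just as you say.
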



\subsection{Absolute continuity of the Malliavin trace}

In two papers on models for insider trading in mathematical finance, Imkeller et al.~\cite{Imkeller_etal:2001} and Imkeller~\cite{Imkeller:2003} introduced an extension of Jacod's criterion for initial expansions, based on Malliavin calculus. Given a measure-valued random variable $F(du;\omega)$ defined on Wiener space with coordinate process $(W_t)_{0\leq t\leq 1}$, they introduce a Malliavin derivative $D_t F(du;\omega)$, defined for all $F$ satisfying certain regularity conditions. The full details are outside the scope of the present paper, and we refer the interested reader to~\cite{Imkeller_etal:2001} and~\cite{Imkeller:2003}. We continue to let $\mathbb H$ be the initial expansion of~$\mathbb F$.

The extension of Jacod's criterion is the following. Let $P_t(du,\omega)=P(\tau\in du\mid\mathcal F_t)(\omega)$, and assume that $D_tP_t(du,\omega)$ exists and satisfies
$$
\sup_{f \in C_b(\mathbb R),\ \|f\|\leq 1} E\left( \int_0^1 \langle D_s P_s(du), f\rangle^2 ds \right) < \infty.
$$
Here $C_b(\mathbb R)$ is the space of bounded and continuous functions on $\mathbb R$, $\|\cdot\|$ is the supremum norm, and $\langle F(du),f\rangle = \int_{\mathbb R_+} f(u) F(du)$ for a random measure $F$ and $f\in C_b(\mathbb R)$. Assume also that
$$
D_t P_t(du,\omega) \ll P_t(du,\omega) \quad a.s. \ \text{for\ all\ } t \in [0,1],
$$
and let $g_t(u;\omega)$ be a suitably measurable version of the corresponding density. Then they prove the following result.

\begin{theorem}\label{T:M}
Under the above conditions, if $\int_0^1 |g_t(\tau)| dt < \infty$ a.s., then
$$
W_t - \int_0^t g_s(\tau) ds
$$
is a Brownian motion in the initially expanded filtration $\mathbb H$.
\end{theorem}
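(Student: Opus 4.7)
The plan is to invoke L\'evy's characterization of Brownian motion: since $\tilde W_t := W_t - \int_0^t g_s(\tau) ds$ is continuous and differs from $W$ by a process of locally finite variation, we have $[\tilde W,\tilde W]_t = [W,W]_t = t$ as a pathwise (hence filtration-independent) identity. Hence it is enough to show that $\tilde W$ is an $\mathbb H$~local martingale, and continuity plus the bracket identity will upgrade this to a Brownian motion.

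To verify the martingale property I would first work under a strengthened integrability assumption (say $E\int_0^1 g_r(\tau)^2 dr < \infty$), obtained after localizing along $T_n = \inf\{t : \int_0^t |g_s(\tau)|ds > n\}$, and remove the extra assumption at the end by a standard limit argument. Since $\mathcal H_s = \mathcal F_s \vee \sigma(\tau)$, the tower property and a monotone class argument reduce $E[\tilde W_t - \tilde W_s \mid \mathcal H_s] = 0$ to checking the identity
$$
E\bigl[Z h(\tau)(W_t - W_s)\bigr] = E\Bigl[Z h(\tau) \int_s^t g_r(\tau) dr\Bigr]
$$
for every bounded $\mathcal F_s$-measurable $Z$ and every $h \in C_b(\mathbb R)$. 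Conditioning the left side on $\mathcal F_t$ rewrites it as $E[Z(W_t-W_s)\, M^h_t]$, where $M^h_t := \langle P_t, h\rangle = E[h(\tau) \mid \mathcal F_t]$ is an $\mathbb F$~martingale.

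The central step is to express $M^h$ via the Clark-Ocone formula in its \emph{trace} form
$$
M^h_t = \langle P_0, h\rangle + \int_0^t \langle D_r P_r, h\rangle \, dW_r = \langle P_0, h\rangle + \int_0^t \Bigl(\int h(u) g_r(u) P_r(du)\Bigr) dW_r,
$$
the second equality using the absolute continuity hypothesis $D_r P_r(du) = g_r(u) P_r(du)$. With this representation in hand, It\^o's isometry applied to $Z(W_t - W_s) = \int_s^t Z\, dW_r$ together with Fubini and the tower property $\int h(u) g_r(u) P_r(du) = E[h(\tau) g_r(\tau) \mid \mathcal F_r]$ produces the desired identity in one line.

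The main obstacle is justifying the trace form of the Clark-Ocone representation for $M^h$. The hypothesis
$$
\sup_{f \in C_b,\ \|f\|\leq 1} E \int_0^1 \langle D_s P_s, f\rangle^2 ds < \infty
$$
places the candidate integrand $r \mapsto \langle D_r P_r, h\rangle$ in $L^2(\Omega \times [0,1])$, and, combined with the Malliavin-calculus framework for measure-valued random variables of~\cite{Imkeller_etal:2001}, is precisely what upgrades the standard Clark-Ocone representation (with anticipating kernel $E[D_r h(\tau)\mid \mathcal F_r]$) to the pointwise trace form above. Once this technical input is granted, all remaining steps are routine It\^o calculus.
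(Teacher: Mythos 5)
A preliminary remark: the paper itself gives no proof of Theorem~\ref{T:M}; it is quoted from Imkeller, Pontier and Weisz~\cite{Imkeller_etal:2001} and Imkeller~\cite{Imkeller:2003}. Your skeleton --- test the increment against $Zh(\tau)$ with $Z$ bounded and $\mathcal F_s$-measurable, represent the $\mathbb F$~martingale $\langle P_t,h\rangle$ as a stochastic integral whose integrand is the trace $\langle D_rP_r,h\rangle$, use $D_rP_r(du)=g_r(u)P_r(du)$ and the It\^o product rule, and finish with L\'evy's characterization --- is essentially the route of those references. Be aware, however, that the ``trace form'' of the Clark--Ocone representation is not a side technicality: it is the whole content of the result, and your write-up grants it wholesale from the very framework being cited. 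That is defensible given that the paper also only cites the result, but nothing in your argument proves it.

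The genuine gap is in the integrability bookkeeping. The hypothesis is only $\int_0^1|g_t(\tau)|\,dt<\infty$ a.s., so the target identity $E[Zh(\tau)(W_t-W_s)]=E[Zh(\tau)\int_s^t g_r(\tau)\,dr]$ need not even make sense: the final Fubini/tower step requires $E\int_s^t|g_r(\tau)|\,dr<\infty$, and the identification $\langle D_rP_r,h\rangle=E[h(\tau)g_r(\tau)\mid\mathcal F_r]$ already presupposes (conditional) integrability of $g_r(\tau)$, which the $\sup_f$ condition does not supply --- it controls $\langle D_rP_r,h\rangle$ in $L^2(\Omega\times[0,1])$, not $E|g_r(\tau)|$. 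Your proposed localization along $T_n=\inf\{t:\int_0^t|g_s(\tau)|\,ds>n\}$ does not repair this as stated: stopping bounds $\int_0^{T_n}|g_s(\tau)|\,ds$ pathwise by $n$ but yields no bound on $E\int_0^1 g_r(\tau)^2\,dr$, and, more seriously, once you stop at an $\mathbb H$~stopping time the fixed-time monotone-class computation no longer applies, since what must be shown is $E[\tilde W_{t\wedge T_n}-\tilde W_{s\wedge T_n}\mid\mathcal H_s]=0$ and the stopped increment is not of the form (increment of $W$ between fixed times) minus (integral between fixed times) tested against $Zh(\tau)$. A correct treatment must either run the computation for truncations (for instance replacing $g_r(u)$ by $g_r(u)\mathbf{1}_{\{|g_r(u)|\leq n\}}$, or working on an exhaustion of $\Omega$ by sets where the drift is bounded) and pass to the limit in the semimartingale decomposition, or first prove the true-martingale statement under an additional integrability hypothesis and then perform the localization as in~\cite{Imkeller_etal:2001}. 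As written, the reduction from ``a.s.\ finite'' to ``square integrable'' is asserted but not achieved, so the easy It\^o-calculus shell is fine but the passage to the stated hypotheses is missing.
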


One example where this holds but Jacod's criterion fails is $\tau=\sup_{0\leq t\leq 1}W_t$. In this case $g_t(\tau)$ can be computed explicitly and the $\mathbb H$~decomposition of $W$ obtained. Due to the martingale representation theorem in $\mathbb F$, this allows one to obtain the $\mathbb H$~decomposition for every $\mathbb F$~local martingale. Using Theorem~\ref{T:gen}, the decomposition in the progressively expanded filtration $\mathbb G$ can then also be obtained.

\begin{corollary}
Under the assumptions of Theorem~\ref{T:M},
$$
W_t - \int_0^{t\wedge\tau} \frac{d\langle W,Z\rangle_s}{Z_{s-}} - \int_{t\wedge\tau}^t g_s(\tau) ds
$$
is a $\mathbb G$~Brownian motion.
\end{corollary}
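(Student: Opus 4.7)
The plan is to apply the general decomposition result, Theorem~\ref{T:gen}, with $M=W$, with $\mathbb{G}=\mathbb{G}^{\tau}$, and with $\mathbb{H}$ the initial expansion of $\mathbb{F}$ with $\tau$. By Lemma~\ref{L:21}, $\mathbb{H}$ coincides with $\mathbb{G}^{\tau}$ after $\tau$, and the intersection condition $\mathcal{G}^{\tau}_t\cap\{\tau>t\}=\mathcal{F}_t\cap\{\tau>t\}$ is a well-known property of the progressive expansion (and is noted in the discussion leading up to Lemma~\ref{L:inters}), so the hypotheses of Theorem~\ref{T:gen} concerning the pair $(\mathbb{G},\mathbb{H})$ are in place.

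Next, I would identify the $\mathbb{H}$~predictable finite variation process $A$ required by Theorem~\ref{T:gen}. Theorem~\ref{T:M} yields exactly this: $A_t = \int_0^t g_s(\tau)\,ds$ is $\mathbb{H}$~adapted and continuous (hence $\mathbb{H}$~predictable) of finite variation, and $\widetilde M := W - A$ is an $\mathbb{H}$~Brownian motion, in particular an $\mathbb{H}$~local martingale. Substituting this choice of $A$ into the conclusion of Theorem~\ref{T:gen} yields
$$
N_t \;=\; W_t - \int_0^{t\wedge\tau} \frac{d\langle W,\mu\rangle_s + dJ_s}{Z_{s-}} - \int_{t\wedge\tau}^t g_s(\tau)\,ds
$$
as a $\mathbb{G}$~local martingale, where $Z$, $\mu$, $J$ are as in Theorem~\ref{T:JY}. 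I would then simplify the Jeulin--Yor drift using continuity of $W$: since $W$ has no jumps, $\Delta W_\tau=0$, and so the dual predictable projection $J$ vanishes. Moreover, $Z-\mu$ is an $\mathbb{F}$~predictable process of finite variation, so $\langle W,Z\rangle = \langle W,\mu\rangle$. This reduces the drift on $[0,t\wedge\tau]$ to $\int_0^{t\wedge\tau} d\langle W,Z\rangle_s / Z_{s-}$, matching the expression in the corollary.

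Finally, to upgrade ``$\mathbb{G}$~local martingale'' to ``$\mathbb{G}$~Brownian motion'' I would invoke L\'evy's characterization. The process $N$ is continuous (it is the sum of the continuous $W$ and two continuous finite variation integrals), its initial value is zero, and its quadratic variation is $[N,N]_t = [W,W]_t = t$, since the finite variation integrals contribute nothing to the bracket. By L\'evy's theorem, a continuous $\mathbb{G}$~local martingale with bracket $t$ is a $\mathbb{G}$~Brownian motion, which gives the claim.

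The only mildly delicate points are the simplifications $J\equiv 0$ and $\langle W,\mu\rangle=\langle W,Z\rangle$, but both follow immediately from the continuity of $W$ and standard properties of the Doob--Meyer decomposition, so I do not anticipate a real obstacle; all the substantive work has been done in Theorems~\ref{T:gen} and~\ref{T:M}.
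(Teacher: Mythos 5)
Your proposal is correct and follows exactly the route the paper intends: the corollary is stated as an immediate consequence of Theorem~\ref{T:gen} applied with $M=W$, $\mathbb H$ the initial expansion, and $A_t=\int_0^t g_s(\tau)\,ds$ supplied by Theorem~\ref{T:M}, with the simplifications $J\equiv 0$ and $\langle W,\mu\rangle=\langle W,Z\rangle$ (from continuity of $W$) and L\'evy's characterization giving the Brownian motion conclusion. Your write-up in fact spells out these routine reductions more explicitly than the paper does.
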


%
%

\section{Expansion with multiple random times} \label{S:2}

We now move on to progressive expansions with multiple random times. We start again with a filtered probability space $(\Omega,\mathcal F, \mathbb F,P)$, but instead of a single random time we consider a vector of random times
$$
\nv \tau = (\tau_1,\ldots,\tau_n).
$$
We emphasize that there are no restrictions on the ordering of the individual times. This is a significant departure from previous work in the field, where the times are customarily assumed to be ordered. The progressive expansion of $\mathbb F$ with $\nv \tau$ is
$$
\mathcal G_t = \bigcap_{u>t} \mathcal G^0_u  \qquad \text{where}\qquad  \mathcal G^0_t =\mathcal F_t \vee \sigma(\tau_i \wedge t; i=1,\ldots,n),
$$
and we are interested in the semimartingale decompositions of $\mathbb F$~local martingales in the $\mathbb G$~filtration. Several other filtrations will also appear, and we now introduce notation that will be in place for the remainder of this section, except in Theorem~\ref{T:dm11} and its corollary. Let $I\subset\{1,\ldots,n\}$ be an index set.

\begin{itemize}
\item $\sigma_I=\max_{i \in I} \tau_i$ and $\rho_I = \min_{j \notin I} \tau_j$
\item $\mathbb G^I$ denotes the initial expansion of $\mathbb F$ with the random vector $\nv\tau_I=(\tau_i)_{i\in I}$
\item $\mathbb H^I$ denotes the progressive expansion of $\mathbb G^I$ with the random time $\rho_I$
\end{itemize}

If $I=\emptyset$, then $\mathbb G^I = \mathbb F$ and $\mathbb H^I$ is the progressive expansion of $\mathbb F$ with $\rho_\emptyset=\min_{i=1,\ldots,n}\tau_i$. If on the other hand $I=\{1,\ldots,n\}$, then $\mathbb G^I = \mathbb H^I$, and coincides with the initial expansion of $\mathbb F$ with $\nv\tau_I=\nv \tau$. Notice also that we always have $\mathbb  G^I\subset \mathbb H^I$.

The idea from Section~\ref{S:1} can be modified to work in the present context. The intuition is that the filtrations $\mathbb G$ and $\mathbb H^I$ coincide on $\lc  \sigma_I, \rho_I\lc$. The $\mathbb G$ decomposition on $\lc  \sigma_I, \rho_I\lc$ of an $\mathbb F$~local martingale $M$ can then be obtained by computing its decomposition in $\mathbb H^I$. This is done in two steps. First it is obtained in $\mathbb G^I$ using, for instance, Jacod's theorem (Theorem~\ref{T:J}), and then in~$\mathbb H^I$ up to time $\rho_I$ using the Jeulin-Yor theorem (Theorem~\ref{T:JY}).

The following results collect some properties of the relationship between $\mathbb H^I$ and $\mathbb G$, thereby clarifying in which sense they coincide on $\lc \sigma_I,\rho_I\lc$. We take the index set $I$ to be given and fixed.

\begin{lemma} \label{L:HG}
Let $X$ be an $\mathcal H^I_t$-measurable random variable. Then the quantity $X\1{\sigma_I\leq t}$ is $\mathcal G_t$-measurable. As a consequence, if $H$ is an $\mathbb H^I$~optional (predictable) process, then $\nv 1_{\lc\sigma_I,\rho_I\lc}(H-H_{\sigma_I})$ is $\mathbb G$~optional (predictable).
\end{lemma}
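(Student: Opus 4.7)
The lemma has two parts: a measurability statement for random variables on $\{\sigma_I \leq t\}$, and a process-level refinement. The plan is to prove the first part by a Monotone Class argument preceded by a right-continuity reduction, and then obtain the second part by applying the first pointwise in $r$ to the values $H_r$ and $H_{\sigma_I}$.

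For the first part, I would fix $u > t$ and aim to show $X\1{\sigma_I \leq t}$ is $\mathcal G^0_u$-measurable; right-continuity $\mathcal G_t = \bigcap_{u>t}\mathcal G^0_u$ then delivers $\mathcal G_t$-measurability after intersecting over $u>t$. The key reduction is to pick an intermediate $v \in (t,u)$ and use the inclusions $\mathcal H^I_t \subset \mathcal H^{I,0}_v = \mathcal G^I_v \vee \sigma(\rho_I \wedge v)$, $\mathcal G^I_v = \bigcap_{w>v}(\mathcal F_w \vee \sigma(\tau_I)) \subset \mathcal F_u \vee \sigma(\tau_I)$, and $\sigma(\rho_I \wedge v) \subset \sigma(\rho_I \wedge u)$, to conclude that $X$ is measurable with respect to the concrete $\sigma$-algebra $\mathcal F_u \vee \sigma(\tau_I) \vee \sigma(\rho_I \wedge u)$. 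The Monotone Class theorem then reduces the claim to products $X = Y f(\tau_I) g(\rho_I \wedge u)$ with $Y$ bounded $\mathcal F_u$-measurable and $f,g$ bounded Borel. On $\{\sigma_I \leq t\}$ each $\tau_i$ for $i \in I$ coincides with $\tau_i \wedge t$, so $f(\tau_I)\1{\sigma_I \leq t} = f(\tau_I \wedge t)\1{\sigma_I \leq t}$ is $\mathcal G^0_t \subset \mathcal G^0_u$-measurable; and $g(\rho_I \wedge u) = g(\min_{j \notin I}(\tau_j \wedge u))$ is $\mathcal G^0_u$-measurable by the definition of $\mathbb G$. Combining with $Y \in \mathcal F_u \subset \mathcal G^0_u$ finishes the step.

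For the second part, I would invoke the first pointwise in $r$. Both $\sigma_I$ and $\rho_I$ are $\mathbb G$-stopping times, being the maximum and minimum of the $\mathbb G$-stopping times $\tau_i$, so $\{r < \rho_I\} \in \mathcal G_r$. For each fixed $r$, $H_r$ is $\mathcal H^I_r$-measurable, and since $\sigma_I \wedge r$ is an $\mathbb H^I$-stopping time bounded by $r$, the identity $H_{\sigma_I}\1{\sigma_I \leq r} = H_{\sigma_I \wedge r}\1{\sigma_I \leq r}$ shows this random variable is also $\mathcal H^I_r$-measurable. Applying part (i) with $t=r$ yields $\mathcal G_r$-measurability of both $H_r\1{\sigma_I \leq r}$ and $H_{\sigma_I}\1{\sigma_I \leq r}$, hence of $\1{\sigma_I \leq r < \rho_I}(H_r - H_{\sigma_I})$, giving $\mathbb G$-adaptedness. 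For càdlàg generators $H = h\nv 1_{\lc s,u\lc}$ with $h$ bounded $\mathcal H^I_s$-measurable the resulting process is visibly càdlàg (the apparent jump at $r = \sigma_I$ vanishes because $H_{\sigma_I} - H_{\sigma_I} = 0$), hence $\mathbb G$-optional, and the functional Monotone Class theorem extends this to every $\mathbb H^I$-optional $H$. The predictable version is handled by an analogous argument using left-continuous generators $h\nv 1_{\rc s,u\rc}$ in the predictable $\sigma$-algebra, again exploiting that the factor $H_r - H_{\sigma_I}$ vanishes at $r = \sigma_I$ so that the indicator may be replaced by $\1{\sigma_I < r < \rho_I}$.

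The main obstacle I anticipate is the bookkeeping across the nested right-continuous closures of $\mathbb F \subset \mathbb G^I \subset \mathbb H^I$ and of $\mathbb G$. The small but essential trick is the intermediate parameter $v \in (t,u)$: it sandwiches the a priori intractable $\mathcal G^I_v$ inside the concrete $\mathcal F_u \vee \sigma(\tau_I)$ and produces the clean product generators on which Monotone Class can act.
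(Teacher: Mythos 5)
Your proof is correct and follows essentially the same route as the paper's: a Monotone Class reduction to product generators involving an $\mathcal F$-measurable factor, $f(\tau_I)$ and $g(\rho_I\wedge u)$, the key identity $\tau_i=\tau_i\wedge t$ on $\{\sigma_I\leq t\}$ together with the $\mathcal G^0_u$-measurability of $\rho_I\wedge u$, right-continuity handled by intersecting over $u>t$, and the process statement obtained by applying the first part to elementary generators $h\nv 1_{\lc s,u\lc}$ (resp.\ $h\nv 1_{\rc s,u\rc}$) and invoking Monotone Class again. Your intermediate parameter $v\in(t,u)$ and the pointwise step via $H_{\sigma_I\wedge r}$ merely make explicit the bookkeeping that the paper compresses into ``$\mathbb H^I$ is the right-continuous version of this filtration, so the result follows.''
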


\begin{proof} \label{L:alpha}
Let $X$ be of the form $X=f k(\rho_I\wedge t) \prod_{i\in I} h_i(\tau_i)$ for some $\mathcal F_t$-measurable random variable $f$ and Borel functions $k$, $h_i$. Then
$$
X\1{\sigma_I\leq t} = f k(\rho_I\wedge t) \prod_{i\in I} h_i(\tau_i\wedge t) \1{\sigma_I\leq t},
$$
which is $\mathcal G_t$-measurable, since $\sigma_I$ and $\rho_I$ are $\mathbb G$~stopping times and $\tau_i\wedge t$ is $\mathcal G_t$-measurable by construction. The Monotone Class theorem shows that the statement holds for every $X$ that is measurable for $\mathcal F_t\vee\sigma(\tau_i:i\in I)\vee\sigma(\rho_I\wedge t)$. $\mathbb H^I$ is the right-continuous version of this filtration, so the result follows. 

Now consider an $\mathbb H^I$~predictable process of the form $H=h\nv 1_{\rc s,t\rc}$ with $s\leq t$ and $h$ an $\mathcal H^I_s$-measurable random variable. Then
$$
H_r\nv 1_{\lc\sigma_I,\rho_I\lc}(r) = h\1{s<r\leq t}\1{\sigma_I\leq r < \rho_I} - h\1{s<\sigma_I\leq r<\rho_I},
$$
which defines a $\mathbb G$~predictable process using the first part of the lemma. An application of the Monotone Class theorem yields the desired result in the predictable case. The optional case is similar.
\end{proof}

\begin{lemma} \label{L:st}
Let $T_n$ be an $\mathbb H^I$~stopping time and define $T_n' = (\sigma_I\vee T_n)\wedge(\rho_I\vee n)$. Then $T'_n$ is a $\mathbb G$~stopping time.
\end{lemma}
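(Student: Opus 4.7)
The plan is to show directly that $\{T_n' \leq t\} \in \mathcal{G}_t$ for every $t \geq 0$, by splitting the event according to which of the two terms in the minimum achieves $T_n'$.

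First, I would write
\[
\{T_n' \leq t\} = \{\sigma_I \vee T_n \leq t\} \cup \{\rho_I \vee n \leq t\} = \big(\{\sigma_I \leq t\}\cap\{T_n\leq t\}\big)\cup\big(\{\rho_I\leq t\}\cap\{n\leq t\}\big).
\]
The second piece is immediate: $\rho_I = \min_{j\notin I}\tau_j$ is a $\mathbb{G}$~stopping time because each $\tau_j$ is (by construction of $\mathbb{G}$), so $\{\rho_I\leq t\}\in\mathcal{G}_t$, and $\{n\leq t\}$ is deterministic. Similarly, $\sigma_I = \max_{i\in I}\tau_i$ is a $\mathbb{G}$~stopping time, so $\{\sigma_I\leq t\}\in\mathcal{G}_t$.

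The only nontrivial piece is $\{T_n\leq t\}\cap\{\sigma_I\leq t\}$. Since $T_n$ is an $\mathbb{H}^I$~stopping time, the indicator $X := \nv 1_{\{T_n\leq t\}}$ is $\mathcal{H}^I_t$-measurable. Applying the first assertion of Lemma~\ref{L:HG} to $X$, the product $X\,\nv 1_{\{\sigma_I\leq t\}}=\nv 1_{\{T_n\leq t\}\cap\{\sigma_I\leq t\}}$ is $\mathcal{G}_t$-measurable, hence $\{T_n\leq t\}\cap\{\sigma_I\leq t\}\in\mathcal{G}_t$.

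Combining the two pieces shows $\{T_n'\leq t\}\in\mathcal{G}_t$, so $T_n'$ is a $\mathbb{G}$~stopping time. There is no real obstacle here: the entire argument is the observation that the truncation by $\sigma_I$ from below and by $\rho_I\vee n$ from above is precisely what is needed to reduce the $\mathbb{H}^I$-measurability of $\{T_n\leq t\}$ to $\mathbb{G}$-measurability via Lemma~\ref{L:HG}, which is the formal expression of the intuition that $\mathbb{G}$ and $\mathbb{H}^I$ coincide on $\lc\sigma_I,\rho_I\lc$.
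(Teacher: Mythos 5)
Your proof is correct and takes essentially the same route as the paper's: both reduce the stopping-time property to the $\mathcal G_t$-measurability of $\1{\sigma_I\leq t}\1{T_n\leq t}$ via the first assertion of Lemma~\ref{L:HG}, combined with the fact that $\sigma_I$ and $\rho_I$ are $\mathbb G$~stopping times. Working with $\{T_n'\leq t\}$ rather than $\{T_n'>t\}$ just shortens the set algebra; otherwise the argument is the same.
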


\begin{proof}
We need to show that $\{T_n' > t\}\in\mathcal G_t$ for every $t\geq 0$. Careful set manipulations yield
\begin{align*}
\1{T_n' > t}
&= \1{\sigma_I\vee T_n > t} \1{\rho_I\vee n > t} \\
&= \left(1 - \1{\sigma_I\leq t}\1{T_n\leq t}\right) \left(1 - \1{\rho_I \leq t} \1{n \leq t}\right).
\end{align*}
We have that $1 - \1{\rho_I \leq t} \1{n \leq t} = 1 - \1{\rho_I \leq t}(1 - \1{n> t}) = \1{\rho_I>t} + \1{\rho_I\leq t}\1{n>t}$, so
\begin{align*}
\1{T_n' > t}
&= \1{\rho_I>t} - \1{\sigma_I \leq t} \1{T_n \leq t}\1{\rho_I>t} \\
& \qquad + \1{\rho_I\leq t}\1{n>t} - \1{\sigma_I\leq t}\1{T_n\leq t}\1{\rho_I\leq t}\1{n>t}.
\end{align*}
The first and third terms are $\mathcal G_t$-measurable since $\rho_I$ is a $\mathbb G$~stopping time. The second term is equal to $\1{T_n \leq t}\1{\sigma_I \leq t < \rho_I}$ and is thus $\mathcal G_t$-measurable by Lemma~\ref{L:HG}, which also gives the measurability of the fourth term.
\end{proof}

The next theorem generalizes Theorem~\ref{T:gen} to the case of progressive enlargement with multiple, not necessarily ranked times.

\begin{theorem}\label{T:dm1}
Let $M$ be an $\mathbb F$~local martingale such that $M_0=0$. For each $I\subset\{1,\ldots,n\}$, suppose there exists a $\mathbb G^I$~predictable finite variation process $A^I$ such that $M-A^I$ is a $\mathbb G^I$~local martingale. Moreover, define
$$
Z^I_t = P(\rho_I > t \mid \mathcal G^I_t),
$$
and let $\mu^I$ and $J^I$ be as in Theorem~\ref{T:JY}. Then $M$ is a $\mathbb G$~semimartingale with local martingale part $M_t - A_t$, where
$$
A_t = \sum_I \1{\sigma_I\leq \rho_I} \left( \int_{t\wedge\sigma_I}^{t\wedge\rho_I} dA^I_s + \int_{t\wedge\sigma_I}^{t\wedge\rho_I} \frac{d\langle M, \mu^I \rangle_s + dJ^I_s}{Z^I_{s-}} \right).
$$
Here the sum is taken over all $I\subset\{1,\ldots,n\}$.
\end{theorem}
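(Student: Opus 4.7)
The plan is to decompose $M - A$ as a finite sum $\sum_I L^I$ indexed by $I \subseteq \{1, \ldots, n\}$, where each $L^I$ is a $\mathbb{G}$-local martingale carrying the local-martingale contribution on the stochastic interval $\lc \sigma_I, \rho_I \lc$. This mimics the two-piece construction in the proof of Theorem~\ref{T:gen}, applied separately to each index set $I$ with the auxiliary filtration $\mathbb{H}^I$ playing the role of the single initial expansion.

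First, for each $I$, write $\tilde M^I := M - A^I$, a $\mathbb{G}^I$-local martingale by hypothesis. Applying Theorem~\ref{T:JYExt} in the base filtration $\mathbb{G}^I$ with random time $\rho_I$ and progressive expansion $\mathbb{H}^I$ yields the $\mathbb{H}^I$-local martingale
$$
N^I_t := \tilde M^I_{t\wedge\rho_I} - \int_0^{t\wedge\rho_I}\frac{d\langle M,\mu^I\rangle_s + dJ^I_s}{Z^I_{s-}}.
$$
Set $L^I_t := N^I_t - N^I_{t\wedge \sigma_I}$. As a difference of an $\mathbb{H}^I$-local martingale and its stop at the $\mathbb{H}^I$-stopping time $\sigma_I$, $L^I$ is an $\mathbb{H}^I$-local martingale. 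Since $N^I$ is already stopped at $\rho_I$, a direct check shows $L^I \equiv 0$ on $\{\sigma_I>\rho_I\}$, so in fact $L^I = \1{\sigma_I\leq\rho_I}(N^I - N^I_{\cdot\wedge\sigma_I})$. A telescoping argument then gives $\sum_I L^I_t = M_t - A_t$: fixing $(\omega,t)$ and writing $I^* = \{i:\tau_i\leq t\}$, only subsets $I\subseteq I^*$ with $\sigma_I\leq\rho_I$ contribute non-trivially, and these form the chain $\emptyset = I_0 \subsetneq I_1 \subsetneq \cdots \subsetneq I_{|I^*|} = I^*$ obtained by adjoining $\tau_i$'s in increasing order; their $M$-increments telescope to $M_t$ (using $M_0=0$), while the predictable terms assemble precisely into $A_t$ as written in the statement.

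The remaining task is to show each $L^I$ is a $\mathbb{G}$-local martingale. For $\mathbb{G}$-adaptedness I write $L^I_t = \1{\sigma_I\leq t}(N^I_t - N^I_{\sigma_I \wedge t})$; the factor in parentheses is $\mathcal{H}^I_t$-measurable, so the first part of Lemma~\ref{L:HG} gives $L^I_t\in\mathcal{G}_t$, and since $L^I$ is c\`adl\`ag it is $\mathbb{G}$-optional. Following the strategy in the proof of Theorem~\ref{T:gen}, I take an $\mathbb{H}^I$-reducing sequence $(T_n)$ for $N^I$ and define $T_n^* := (\sigma_I\vee T_n)\wedge(\rho_I\vee n)$, which is a $\mathbb{G}$-stopping time by Lemma~\ref{L:st} and tends to $\infty$. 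One verifies that $L^I_{\cdot\wedge T_n^*}$ is a uniformly integrable $\mathbb{H}^I$-martingale; being $\mathbb{G}$-adapted, it coincides with its $\mathbb{G}$-optional projection and is therefore a $\mathbb{G}$-martingale. Hence $(T_n^*)$ reduces $L^I$ in $\mathbb{G}$, and $M - A = \sum_I L^I$ is a $\mathbb{G}$-local martingale. The $\mathbb{G}$-predictability of $A$ then follows termwise from the predictable version of Lemma~\ref{L:HG}: each integrand is $\mathbb{G}^I \subseteq \mathbb{H}^I$-predictable, so restricting the corresponding FV process to $\lc \sigma_I,\rho_I\lc$ and centering at $\sigma_I$ produces a $\mathbb{G}$-predictable process matching the expression in the theorem.

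The main obstacle will be the uniform integrability in the $T_n^*$ step: since $T_n^*$ can exceed $T_n$, one cannot directly invoke the reducing property of $(T_n)$, and must instead exploit $T_n^* \leq \rho_I\vee n$ together with the fact that $N^I$ is already stopped at $\rho_I$ to dominate $L^I_{\cdot\wedge T_n^*}$ by quantities controlled through the original reducing sequence.
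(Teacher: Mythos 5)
Your construction is in essence the paper's own proof: the same $\mathbb H^I$-local martingale obtained by applying the Jeulin--Yor theorem on top of the $\mathbb G^I$-decomposition, the same pieces supported on $\lc\sigma_I,\rho_I\lc$, $\mathbb G$-adaptedness via Lemma~\ref{L:HG}, the stopping times $(\sigma_I\vee T_n)\wedge(\rho_I\vee n)$ via Lemma~\ref{L:st}, and the same telescoping identity (your inline chain argument is exactly the content of Lemma~\ref{L:sum}). Your observation that the indicator $\1{\sigma_I\leq\rho_I}$ may be dropped because $N^I$ is already stopped at $\rho_I$ is a small genuine streamlining of the paper's Lemma~\ref{L:gl1}~(i).

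There is, however, a real gap at exactly the point you flag. You assert that $L^I_{\cdot\wedge T_n^*}$ is a uniformly integrable $\mathbb H^I$-martingale, but stopping a local martingale at a stopping time that may exceed the reducing time $T_n$ is not legitimate in general, and the domination strategy you sketch cannot repair this: integrable bounds can give uniform integrability, but not the optional-sampling/martingale property at the larger time $T_n^*$. What is actually needed --- and what the paper isolates as Lemma~\ref{L:gl1}~(ii) --- is the pathwise identity $L^I_{t\wedge T_n^*}=L^I_{t\wedge T_n}$, which holds because $L^I$ vanishes on $\lc 0,\sigma_I\rc$ (so replacing $T_n$ by $\sigma_I\vee T_n$ changes nothing on $\{T_n\leq\sigma_I\}$) and is constant from $\rho_I$ onward (so capping at $\rho_I\vee n$ changes nothing either); with it, $L^I_{\cdot\wedge T_n^*}$ inherits the martingale property directly from $L^I_{\cdot\wedge T_n}$. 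A minor related point: you take $(T_n)$ reducing $N^I$ rather than $L^I=N^I-N^I_{\cdot\wedge\sigma_I}$; this is fine because a uniformly integrable martingale stopped at the $\mathbb H^I$-stopping time $\sigma_I$ remains a uniformly integrable martingale, but it should be said. Until the identity $L^I_{\cdot\wedge T_n^*}=L^I_{\cdot\wedge T_n}$ (or an equivalent argument) is supplied, the key step transferring the reducing sequence to $\mathbb G$ is missing.
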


Notice that when $I=\emptyset$, then $\sigma_I=0$ and $\mathbb G^I=\mathbb F$, so that $A^I=0$ and does not contribute to $A_t$. Similarly, when $I=\{1,\ldots,n\}$, $\rho_I=\infty$ and we have $Z^I_t=1$, causing both $\langle M, \mu^I \rangle$ and $J$ to vanish.

Before proving Theorem~\ref{T:dm1} we need the two following technical lemmas.

\begin{lemma}\label{L:sum}
Let $M$ be an $\mathcal{F}$-measurable process. Then
$$
\sum_{I\subset\{1,\ldots,n\}}\1{\sigma_I\leq\rho_I}(M_{t\wedge\rho_I}-M_{t\wedge\sigma_I})=M_t-M_0
$$
\end{lemma}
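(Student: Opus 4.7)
The plan is to verify the identity pointwise in $\omega$, reducing it to a combinatorial telescoping argument over the distinct values taken by $\tau_1(\omega),\ldots,\tau_n(\omega)$. The key initial observation is that every subset $I$ with $\sigma_I=\rho_I$ contributes a zero summand (since $M_{t\wedge\rho_I}-M_{t\wedge\sigma_I}=0$), so the sum may be restricted to those $I$ satisfying the strict inequality $\sigma_I<\rho_I$.

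First I fix $\omega$ and write $v_1<\cdots<v_p$ for the distinct values of $\tau_1(\omega),\ldots,\tau_n(\omega)$, with level sets $L_\ell=\{i:\tau_i(\omega)=v_\ell\}$, adopting the conventions $v_0=0$ and $v_{p+1}=\infty$ so that $\sigma_\emptyset=v_0$ and $\rho_{\{1,\ldots,n\}}=v_{p+1}$. A short argument then shows that $\sigma_I<\rho_I$ forces $I$ to be a \emph{level prefix} $L_1\cup\cdots\cup L_\ell$ for some $\ell\in\{0,1,\ldots,p\}$: strict separation of $\{\tau_i:i\in I\}$ from $\{\tau_j:j\notin I\}$ means no level $L_\ell$ can be split between $I$ and $I^c$, and no level can be skipped in $I$ while a later level is included. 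Conversely, each such level prefix clearly satisfies $\sigma_I=v_\ell<v_{\ell+1}=\rho_I$.

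Once the sum is restricted to these $p+1$ level prefixes, it telescopes:
\begin{align*}
\sum_{\ell=0}^{p}\bigl(M_{t\wedge v_{\ell+1}}-M_{t\wedge v_{\ell}}\bigr)=M_{t\wedge v_{p+1}}-M_{t\wedge v_0}=M_t-M_0,
\end{align*}
which is the desired identity.

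The main (minor) obstacle is the possibility of ties among the $\tau_i$: without ties one could simply sort the times and obtain $n+1$ prefix subsets automatically. Ties permit additional subsets to satisfy the indicator $\sigma_I\leq\rho_I$, but precisely those extra subsets have $\sigma_I=\rho_I$ and hence drop out. The level-set grouping sidesteps any arbitrary tie-breaking and yields a clean pointwise proof; no measurability or integrability considerations enter the argument.
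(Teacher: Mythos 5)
Your proof is correct and follows essentially the same route as the paper: both reduce the sum to the ``prefix'' subsets of the ranked times (all other subsets contributing nothing, either because the indicator vanishes when $\sigma_I>\rho_I$ or because the increment vanishes when $\sigma_I=\rho_I$) and then telescope. The only difference is bookkeeping---you index prefixes by the level sets of the distinct values, which handles ties without fixing a ranking permutation, whereas the paper groups subsets by cardinality $|I|=j$ and shows each group equals $M_{t\wedge\tau_{(j+1)}}-M_{t\wedge\tau_{(j)}}$ before summing.
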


\begin{proof}
Let $\tau_{(0)}=0<\tau_{(1)}\leq\cdots\leq\tau_{(n)}$ be the ranked sequence of the random times $(\tau_i)_{1\leq i\leq n}$, and $\Sigma$ the permutation of $\{1,\ldots,n\}$ from which they are obtained, i.e $\tau_{\Sigma(i)}=\tau_{(i)}$, for all $i\in\{1,\ldots,n\}$. For each $0\leq j\leq n$, we introduce 
\begin{equation}\label{eq:S}
S_t^j=\sum_{I\mid |I|=j}\1{\sigma_I\leq\rho_I}(M_{t\wedge\rho_I}-M_{t\wedge\sigma_I}).
\end{equation}
We want to show that $\sum_{j=0}^{n}S_t^j=M_t-M_0$. First we claim that for $0\leq j\leq n$
$$
S^j_t= M_{t\wedge\tau_{(j+1)}}-M_{t\wedge\tau_{(j)}}.
$$
For $j=0$, this follows clearly since $\sigma_{\emptyset}=0$, $\rho_{\emptyset}=\tau_{(1)}$ and $\tau_{(0)}=0$. In order to prove the claim for $1\leq j\leq n$, we need to show that the only term that contributes to the sum in~(\ref{eq:S}) is the one corresponding to the subset $J=\{\Sigma(1),\ldots,\Sigma(j)\}$. Noticing that for this term $\sigma_J=\tau_{(j)}$ and $\rho_J=\tau_{(j+1)}$ will prove the claim. Fix $1\leq j\leq n$ and let $I$ be a subset of $\{1,\ldots,n\}$ such that $|I|=j$. Assume there exists $i$ such that $\Sigma(i)\notin I$ and $1\leq i\leq j$. Then $\tau_{\Sigma(i)}=\tau_{(i)}\geq \rho_{I}$. Also, since $|I|=j$, there exists $j+1\leq l\leq n$ such that $\Sigma(l)\in I$. It follows that $\sigma_{I}\geq \tau_{\Sigma(l)}=\tau_{(l)} \geq \tau_{(i)} \geq \rho_I$. This proves our claim. Now fix $0\leq k\leq n$. On $\{\tau_{(k)}\leq t<\tau_{(k+1)}\}$ (with the convention $\tau_{(n+1)}=\infty$), $S^j_t=M_{\tau_{(j+1)}}-M_{\tau_{(j)}}$ for each $j<k$, $S_t^k=M_t-M_{\tau_{(k)}}$ and $S_t^j=0$ for all $j>k$. Summing these terms finally yields
$$
\sum_{j=0}^{n}S^j_t=M_t-M_0.
$$
\end{proof}

\begin{lemma}\label{L:gl1}
Let $L$ be a local martingale in some filtration $\mathbb F$, suppose $\sigma$ and $\rho$ are two stopping times, and define a process $N_t = \1{\sigma\leq\rho} (L_{t\wedge\rho} - L_{t\wedge\sigma})$.
\begin{itemize}
\item[(i)] $N$ is again a local martingale.
\item[(ii)] Let $T$ be a stopping time and define $T' = (\sigma\vee T)\wedge(\rho\vee n)$ for a fixed $n$. Then $N_{t\wedge T} = N_{t\wedge T'}$.
\end{itemize}
\end{lemma}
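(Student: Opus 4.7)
The plan for part (i) is to observe the pointwise identity
$$
N_t = \1{\sigma\leq\rho}(L_{t\wedge\rho} - L_{t\wedge\sigma}) = L_{t\wedge\rho} - L_{t\wedge(\sigma\wedge\rho)}.
$$
On $\{\sigma\leq\rho\}$ one has $\sigma\wedge\rho=\sigma$, so both sides equal $L_{t\wedge\rho}-L_{t\wedge\sigma}$; on $\{\sigma>\rho\}$ one has $\sigma\wedge\rho=\rho$, so the right-hand side reduces to $L_{t\wedge\rho}-L_{t\wedge\rho}=0$, matching the left. The right-hand side is the difference of the stopped processes $L^\rho$ and $L^{\sigma\wedge\rho}$, both local martingales, hence itself a local martingale.

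For part (ii), I plan to reuse the same identity. On $\{\sigma>\rho\}$ the process $N$ vanishes identically and the equality is trivial. On $\{\sigma\leq\rho\}$ the identity shows that $s\mapsto N_s$ is zero on $[0,\sigma]$, equals $L_s-L_\sigma$ on $[\sigma,\rho]$, and is constant at $L_\rho-L_\sigma$ on $[\rho,\infty)$. It therefore suffices to show that $t\wedge T$ and $t\wedge T'$ either coincide, or both lie in $[0,\sigma]$, or both lie in $[\rho,\infty)$.

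I verify this by a case split on $T$. If $T\leq\sigma$, then $\sigma\vee T=\sigma$ and $T'=\sigma\wedge(\rho\vee n)=\sigma$ (using $\sigma\leq\rho\leq\rho\vee n$), so both $t\wedge T$ and $t\wedge T'=t\wedge\sigma$ lie in $[0,\sigma]$. If $\sigma\leq T\leq\rho\vee n$, then $T'=T$ and $t\wedge T=t\wedge T'$. If $T>\rho\vee n$, then $T'=\rho\vee n$, and I split further on $t$: when $t<\rho$ both $t\wedge T$ and $t\wedge T'$ equal $t$; when $t\geq\rho$, one has $t\wedge T\geq\rho$ (since $T>\rho$) and $t\wedge T'\geq\rho$, placing both in $[\rho,\infty)$.

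I anticipate no serious obstacle. Part (i) becomes a near-tautology after the rewriting, and part (ii) reduces to bookkeeping; the only mildly delicate point is the need for the secondary split on whether $t<\rho$ in the final case.
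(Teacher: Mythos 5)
Your proof is correct, and part (i) takes a genuinely different and cleaner route than the paper. The key is your pointwise identity $\1{\sigma\leq\rho}(L_{t\wedge\rho}-L_{t\wedge\sigma}) = L_{t\wedge\rho}-L_{t\wedge(\sigma\wedge\rho)}$, which exhibits $N$ as the difference of the stopped local martingales $L^{\rho}$ and $L^{\sigma\wedge\rho}$; this makes both adaptedness and the local martingale property immediate. The paper instead proves (i) by hand: it notes adaptedness rather tersely (``$N$ is null before $\sigma$''), localizes so that $L$ is a uniformly integrable martingale, and computes $E(N_t\mid\mathcal F_s)$ by splitting on $\{s<\sigma\leq\rho\}$ (where optional sampling at $s\vee\sigma$ kills the term) and on $\{\sigma\leq s\}\cap\{\sigma\leq\rho\}\in\mathcal F_s$. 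Your rewriting buys a shorter argument and, notably, a transparent justification of adaptedness, which is the one point the paper glosses over; the paper's computation, on the other hand, is self-contained and does not rely on spotting the algebraic identity. For part (ii) the two arguments are essentially the same bookkeeping: the paper manipulates the identities $t\wedge T'\wedge\rho = t\wedge(\sigma\vee T)\wedge\rho$ and $t\wedge T'\wedge\sigma=t\wedge(\sigma\vee T)\wedge\sigma$ on $\{\sigma\leq\rho\}$ and splits on $\{T\leq\sigma\}$ versus $\{T>\sigma\}$, while you organize the same case analysis around the path description of $N$ (zero on $[0,\sigma]$, equal to $L-L_\sigma$ on $[\sigma,\rho]$, frozen after $\rho$) with the three-way split $T\leq\sigma$, $\sigma\leq T\leq\rho\vee n$, $T>\rho\vee n$; your cases are exhaustive and each is verified correctly, including the secondary split on $t<\rho$ versus $t\geq\rho$ in the last case.
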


\begin{proof}
\textit{Part $(i)$:} It is clear that $N$ is adapted, since it is null before $\sigma$. Next, by stopping we may assume $L$ is a uniformly integrable martingale. With $s<t$ we get
\begin{align*}
E\left(N_t \mid \mathcal F_s\right)
&= E\left( \1{s<\sigma \leq \rho} (L_{t\wedge\rho} - L_{t\wedge\sigma}) \mid \mathcal F_s\right) \\
& \qquad + E\left( \nv 1_{\{\sigma \leq s\}\cap\{\sigma \leq \rho\}} (L_{t\wedge\rho} - L_{t\wedge\sigma}) \mid \mathcal F_s\right).
\end{align*}
Since $\{\sigma \leq s\}\cap\{\sigma \leq \rho\}\in\mathcal F_s$, the second term equals $\nv 1_{\{\sigma \leq s\}\cap\{\sigma \leq \rho\}} (L_{s\wedge\rho} - L_{s\wedge\sigma})$. The first term equals
\begin{align*}
E\left\{ \1{s<\sigma \leq \rho} E\left( L_{t\wedge\rho} - L_{t\wedge\sigma} \mid \mathcal F_{s\vee\sigma}\right)  \mid \mathcal F_s\right\}
= E\left\{ \1{s<\sigma \leq \rho} ( L_{(s\vee\sigma)\wedge\rho} - L_{(s\vee\sigma)\wedge\sigma} )  \mid \mathcal F_s\right\},
\end{align*}
which is zero since $(s\vee\sigma)\wedge\rho = (s\vee\sigma)\wedge\sigma = \sigma$ on $\{s<\sigma \leq \rho\}$. Assembling the pieces yields
$$
E\left(N_t \mid \mathcal F_s\right) = \nv 1_{\{\sigma \leq s\}\cap\{\sigma \leq \rho\}} (L_{s\wedge\rho} - L_{s\wedge\sigma}) = \1{\sigma\leq\rho} (L_{s\wedge\rho} - L_{s\wedge\sigma}) = N_s,
$$
as desired.

\textit{Part $(ii)$:} The proof consists of a careful analysis of the interplay between the indicators involved in the definition of $N$ and $T'$. On $\{\sigma\leq\rho\}$ we have
$$
t\wedge T' \wedge\rho = t\wedge (\sigma\vee T)\wedge(\rho\vee n) \wedge\rho = t\wedge(\sigma\vee T)\wedge\rho
$$
and
$$
t\wedge T' \wedge\sigma = t\wedge (\sigma\vee T)\wedge(\rho\vee n) \wedge\sigma = t\wedge(\sigma\vee T)\wedge\sigma.
$$
On $\{T\leq \sigma\}\cap\{\sigma\leq\rho\}$, these are both equal to $t\wedge\sigma$, so on this set,
$$
N_{t\wedge T'}\1{T \leq \sigma}=\1{\sigma\leq\rho}\1{T \leq \sigma} (L_{t\wedge\sigma} - L_{t\wedge\sigma}) = 0.
$$
Hence $N_{t\wedge T'}=N_{t\wedge T'}\1{T > \sigma}$. Similarly, $N_{t\wedge T}=0$ on $\{T\leq\sigma\}$, and hence $N_{t\wedge T}\1{T > \sigma}=N_{t\wedge T}$. But on $\{T>\sigma\}\cap\{\sigma\leq\rho\}$,
$$
t\wedge(\sigma\vee T')\wedge\rho = t\wedge T\wedge\rho \qquad\text{and}\qquad t\wedge(\sigma\vee T')\wedge\sigma = t\wedge T\wedge\sigma,
$$
so $N_{t\wedge T'}\1{T > \sigma} = N_{t\wedge T}\1{T > \sigma}$. This yields $N_{t\wedge T'}=N_{t\wedge T}$ as required.
\end{proof}

\begin{proof}[Proof of Theorem~\ref{T:dm1}]
For each fixed index set $I$, the process $M_t-\int_0^t dA^I_s$ is a local martingale in the initially expanded filtration $\mathbb G^I$ by assumption. Now, $\mathbb H^I$ is obtained from $\mathbb G^I$ by a progressive expansion with $\rho_I$, so Theorem~\ref{T:JY} yields that
$$
M^I_t = M_{t\wedge \rho_I}-\int_0^{t\wedge \rho_I} dA^I_s - \int_0^{t\wedge \rho_I} \frac{d\langle M,\mu^I\rangle_s + dJ^I_s}{Z^I_{s-}}
$$
is an $\mathbb H^I$~local martingale. Define the process
$$
N^I_t = \1{\sigma_I \leq \rho_I}\left( M^I_{t\wedge\rho_I} - M^I_{t\wedge\sigma_I}\right).
$$
Our goal is to prove that $N^I$ is a $\mathbb G$~local martingale. This will imply the statement of the theorem, since summing the $N^I$ over all index sets $I$ and using Lemma \ref{L:sum} yields precisely $M-A$.

By part~$(i)$ of Lemma~\ref{L:gl1}, $N^I$ is a local martingale in $\mathbb H^I$. Write
$$
N^I_t = \1{\sigma_I\leq t <\rho_I}\left(M^I_t-M^I_{\sigma_I}\right) +  \1{\sigma_I\leq \rho_I < t}\left(M^I_{\rho_I}-M^I_{\sigma_I}\right) = Y_1 + Y_2
$$
and apply Lemma~\ref{L:HG} with $X=Y_1$ for the first term and $X=Y_2$ for the second to see that $N^I$ is in fact $\mathbb G$~adapted. The use of Lemma~\ref{L:HG} is valid because both $Y_1$ and $Y_2$ are $\mathcal H^I_t$-measurable.

Next, let $(T_n)_{n\geq 1}$ be a reducing sequence for $N^I$ in $\mathbb H^I$. By Lemma~\ref{L:st} we know that $T_n'=(\sigma_I\vee T_n)\wedge(\rho_I\vee n)$ are $\mathbb G$~stopping times, and since $T'_n\geq T_n\wedge n$ we have $T'_n\uparrow\infty$ a.s. Moreover, part~$(ii)$ of Lemma~\ref{L:gl1} implies that $N^I_{t\wedge T_n} = N^I_{t\wedge T_n'}$. Hence $(N^I_{t\wedge T_n'})_{t\geq 0}$ is an $\mathbb H^I$~martingale that is $\mathbb G$~adapted, and therefore even a $\mathbb G$~martingale. We deduce from the above that $N^I$ is a $\mathbb G$~local martingale. A final application of Lemma~\ref{L:HG} shows that $A$ is $\mathbb G$~predictable, so we obtain indeed the $\mathbb G$~semimartingale decomposition of $M$. This completes the proof of Theorem~\ref{T:dm1}.
\end{proof}

We now proceed to study the special case where the vector $\nv\tau$ of random times satisfies Jacod's criterion, meaning that Assumption~\ref{A:J} holds, now with state space $E=\mathbb R^n_+$. Again there is no loss of generality to let $\eta$ be the law of $\nv \tau$. We will further assume that $\eta$ is absolutely continuous w.r.t to Lebesgue measure, so that $\eta(d\nv u)=h(\nv u)d\nv u$. Provided the law of $\nv \tau$ does not have atoms, this restriction is not essential---everything that follows goes through without it---but it simplifies the already quite cumbersome notation.

The joint $\mathcal F_t$~conditional density corresponding to this choice of $\eta$ is denoted $p_t(\nv u;\omega)$. That is,
$$
P( \nv \tau \in d\nv u \mid \mathcal F_t) = p_t(\nv u)du_1\cdots du_n,
$$
where we suppressed the dependence on $\omega$. Now, for an index set $I\subset\{1,\ldots,n\}$ with $|I|=m$, we have, for $\nv u_I\in\mathbb R_+^m$,
$$
P( \nv\tau_I \leq \nv u_I \mid\mathcal F_t) =  \int_{\nv v_I \leq \nv u_I} \int_{\nv v_{-I}\geq 0} p_t(\nv v_I; \nv v_{-I}) d\nv v_{-I}d\nv v_I,
$$
where $\nv\tau_I$ is the subvector of $\nv \tau$ whose components have indices in $I$, and where $\nv v_I$ and $\nv v_{-I}$ are the subvectors of $\nv v$ with indices in $I$, respectively not in $I$. Inequalities should be interpreted componentwise. The above shows that $\nv\tau_I$ also satisfies Assumption~\ref{A:J}, so that there is an appropriately measurable function $p^I_t(\nv u_I;\omega)$ such that
$$
P( \nv \tau_I \in d\nv u_I \mid \mathcal F_t) = p^I_t(\nv u_I)d\nv u_I.
$$
Moreover, this conditional density $p^I$ is given by
$$
p^I_t(\nv u_I) =  \int_{\mathbb R^{n-m}_+} p_t(\nv u_I; \nv u_{-I}) d\nv u_{-I}.
$$

Define
$$
p_t(\nv u_{-I} \mid \nv \tau_I) = \frac{p_t(\nv\tau_I; \nv u_{-I})}{\int_0^\infty\cdots\int_0^\infty p_t(\nv\tau_I; \nv v_{-I})d\nv v_{-I}}.
$$
This is the conditional density of $\nv\tau_{-I}$ given $\mathcal F_t\vee\sigma(\tau_I)$, as one can verify using standard arguments. Defining
\begin{equation}\label{E:ZI}
Z^I_t = \int_t^\infty\cdots\int_t^\infty p_t(\nv u_{-I} \mid \nv \tau_I) d\nv u_{-I},
\end{equation}
we have $Z^I_t = P(\rho_I > t \mid \mathcal F_t\vee\sigma(\tau_I) )$. One then readily checks that we also have
$$
Z^I_t = P(\rho_I > t \mid \mathcal G^I_t).
$$

We can now state the decomposition theorem for continuous $\mathbb F$~local martingales in the progressively expanded filtration~$\mathbb G$, when Jacod's criterion is satisfied. Recall that $\sigma_I=\max_{i \in I} \tau_i$ and $\rho_I = \min_{j \notin I} \tau_j$.

\begin{corollary}
Let $M$ be an $\mathbb F$~local martingale, and assume Assumption~\ref{A:J} is satisfied for $\nv\tau=(\tau_1,\ldots,\tau_n)$. Then $M$ is a $\mathbb{G}$~semimartingale. Furthermore, assume that $M$ is continuous. For each $I\subset\{1,\ldots,n\}$, let $Z^I$ be given by~(\ref{E:ZI}) and let $\mu^I$ and $J^I$ be as in Theorem~\ref{T:JY}. Then $M_t - A_t$ is a $\mathbb G$~local martingale, where
$$
A_t = \sum_I \1{\sigma_I\leq \rho_I}\left( \int_{t\wedge\sigma_I}^{t\wedge\rho_I} \frac{d\langle p^I(\nv u_I),M\rangle_s}{p^I_{s-}(\nv u_I)}\bigg|_{\nv u_I=\nv\tau_I} + \int_{t\wedge\sigma_I}^{t\wedge\rho_I} \frac{d\langle M, \mu^I\rangle_s+dJ^I_s}{Z^I_{s-}} \right).
$$
Here the sum is taken over all $I\subset\{1,\ldots,n\}$.
\end{corollary}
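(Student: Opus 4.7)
The plan is to reduce everything to Theorem~\ref{T:dm1}. That theorem requires, for every $I\subset\{1,\ldots,n\}$, a $\mathbb G^I$~predictable finite variation process $A^I$ such that $M - A^I$ is a $\mathbb G^I$~local martingale; once the $A^I$ are identified explicitly the stated formula for $A_t$ falls out by direct substitution, since the Jeulin-Yor contributions $\int d\langle M,\mu^I\rangle_s/Z^I_{s-} + \int dJ^I_s/Z^I_{s-}$ appear unchanged. For the preliminary claim that $M$ is a $\mathbb G$~semimartingale, I would take $I=\{1,\ldots,n\}$: then $\mathbb G^I$ is the initial expansion of $\mathbb F$ by the whole vector $\nv\tau$, so Jacod's theorem (Theorem~\ref{T:J} with $d=n$ and $\xi=\nv\tau$) makes $M$ a $\mathbb G^I$~semimartingale, and Stricker's theorem applied to the inclusion $\mathbb G\subset\mathbb G^I$ transfers this to $\mathbb G$ since $M$ is already $\mathbb G$~adapted. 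This mirrors the argument used earlier in the paper for the single-time Jacod case.

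To produce $A^I$, I would use the inheritance of Jacod's criterion by subvectors already noted in the text: $p^I_t(\nv u_I) = \int_{\mathbb R^{n-|I|}_+} p_t(\nv u_I;\nv u_{-I})\,d\nv u_{-I}$ is an $\mathcal F_t$~conditional density of $\nv\tau_I$, so the hypothesis of Theorem~\ref{T:J} holds in each $\mathbb G^I$ with $\xi=\nv\tau_I$. Applying that theorem yields an $\widehat{\mathbb F}$~predictable $k^I$ and an increasing predictable $A^I$ with $M - \int_0^\cdot k^I_s(\nv\tau_I)\,dA^I_s$ a $\mathbb G^I$~local martingale, satisfying
$$
\langle p^I(\nv u_I), M\rangle_t = \int_0^t k^I_s(\nv u_I)\,p^I_{s-}(\nv u_I)\,dA^I_s
$$
on $\{p^I_-(\nv u_I)>0\}$ for every $\nv u_I$ outside an $\eta$-null set. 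The continuity of $M$ makes $\langle p^I(\nv u_I),M\rangle$ a well-defined continuous cross bracket, and lets us rewrite the $\mathbb G^I$~drift as the quotient $\int_0^\cdot \frac{d\langle p^I(\nv u_I),M\rangle_s}{p^I_{s-}(\nv u_I)}\big|_{\nv u_I=\nv\tau_I}$, which is exactly the first term under the sum in the displayed formula for $A_t$.

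Plugging this $A^I$ into Theorem~\ref{T:dm1} and summing over $I$ produces the stated decomposition. The main technical point I expect to watch is the measurability housekeeping surrounding the parametric expression $\langle p^I(\nv u_I),M\rangle|_{\nv u_I=\nv\tau_I}$: one must check that the exceptional Borel set supplied by Theorem~\ref{T:J} is almost surely avoided by $\nv\tau_I$ (which holds because $\eta$ may be taken as the law of $\nv\tau$, so the marginal law of $\nv\tau_I$ vanishes on it), and that the $\widehat{\mathbb F}$~predictable version of $k^I$, once evaluated along $\nv\tau_I$, yields a bona fide $\mathbb G^I$~predictable process. Beyond this, the continuity hypothesis on $M$ is precisely what removes jump-related complications both in the Jacod bracket identity and in the Jeulin-Yor term, so no further simplification of $\mu^I$ or $J^I$ is needed before invoking Theorem~\ref{T:dm1}.
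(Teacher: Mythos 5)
Your proposal is correct and follows essentially the same route as the paper: apply Theorem~\ref{T:dm1} with the $\mathbb G^I$~drifts $A^I$ supplied by Theorem~\ref{T:J} for the subvector $\nv\tau_I$ (whose conditional density $p^I$ inherits Jacod's criterion), and rewrite $k^I_s(\nv\tau_I)\,dA_s$ as the quotient $d\langle p^I(\nv u_I),M\rangle_s/p^I_{s-}(\nv u_I)\big|_{\nv u_I=\nv\tau_I}$. Your added details (the Stricker argument for the semimartingale claim and the $\eta$-null-set bookkeeping) are consistent with, and merely flesh out, the paper's terse proof.
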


\begin{proof}
We apply Theorem~\ref{T:dm1} and notice that it follows from Theorem~\ref{T:J} that
$$
A^I_t = \int_0^t k^I_s(\nv\tau_I) d\langle M,M\rangle_s = \int_0^t \frac{d\langle p^I(\nv u_I),M\rangle_s}{p^I_{s-}(\nv u_I)}\bigg|_{\nv u_I=\nv\tau_I}.
$$
This completes the proof.
\end{proof}

We end this section by pointing out that the filtration $\mathbb G^{(\tau,X)}$ introduced in Section~\ref{S:1} can be generalized to the multiple time case. To state the precise result, we first suppose that each random time $\tau_i$ is accompanied by a random variable $X_i$, and let $\nv X=(X_1,\ldots,X_n)$. Define the filtration $\mathbb G^{(\nv \tau, \nv X)}$ by
$$
\mathcal G^{(\nv\tau,\nv X)}_t = \bigcap_{u>t} \mathcal G^{0,(\nv \tau,\nv X)}_u,
$$
where
$$
\mathcal G^{0,(\nv \tau,\nv X)}_t =\mathcal F_t \vee \sigma(\tau_i \wedge t:i=1,\ldots,n) \vee \sigma(X_i1_{\{\tau_i\leq t\}}:i=1,\ldots,n).
$$
Let $I\subset\{1,\ldots,n\}$ be an index set. Assume for simplicity that $P(\tau_i=\tau_j)=0$ for $i\neq j$. We may then define
\begin{itemize}
\item $\nv X_I = (X_i)_{i\in I}$.
\item $Y_I = X_{i^*}$, where $i^*\in I$ is the index for which $\rho_I = \tau_{i^*}$.
\end{itemize}
For the statement and proof of Theorem~\ref{T:dm11}, we redefine the objects $\mathbb G^I$ and $\mathbb H^I$ as follows. For an index set $I\subset\{1,\ldots,n\}$,
\begin{itemize}
\item $\mathbb G^I$ denotes the initial expansion of $\mathbb F$ with the random vector $(\nv\tau_I,\nv X_I)=(\tau_i,X_i)_{i\in I}$.
\item $\mathbb H^I$ denotes the $(\rho_I,Y_I)$-expansion of $\mathbb G^I$.
\end{itemize}

\begin{theorem}\label{T:dm11}
Let $M$ be an $\mathbb F$~local martingale such that $M_0=0$. For each $I\subset\{1,\ldots,n\}$, suppose there exists a $\mathbb G^I$~predictable finite variation process $A^I$ such that $M-A^I$ is a $\mathbb G^I$~local martingale. Moreover, define
$$
Z^I_t = P(\rho_I > t \mid \mathcal G^I_t),
$$
and let $\mu^I$ and $J^I$ be as in Theorem~\ref{T:JY}. Then $M$ is a $\mathbb G^{(\nv \tau, \nv X)}$~semimartingale with local martingale part $M_t - A_t$, where
$$
A_t = \sum_I \1{\sigma_I\leq \rho_I} \left( \int_{t\wedge\sigma_I}^{t\wedge\rho_I} dA^I_s + \int_{t\wedge\sigma_I}^{t\wedge\rho_I} \frac{d\langle M, \mu^I \rangle_s + dJ^I_s}{Z^I_{s-}} \right).
$$
Here the sum is taken over all $I\subset\{1,\ldots,n\}$.
\end{theorem}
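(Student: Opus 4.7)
The plan is to mimic the proof of Theorem~\ref{T:dm1} line by line, replacing $\mathbb G$ with $\mathbb G^{(\nv\tau,\nv X)}$ and using the new meanings of $\mathbb G^I$ and $\mathbb H^I$. Since $\mathbb H^I$ is now the $(\rho_I,Y_I)$-expansion of $\mathbb G^I$, Lemma~\ref{L:inters}, applied with $\mathbb G^I$ in the role of $\mathbb F$, $\rho_I$ in the role of $\tau$ and $Y_I$ in the role of $X$, delivers the intersection condition $\mathcal H^I_t\cap\{\rho_I>t\}=\mathcal G^I_t\cap\{\rho_I>t\}$. Hence Theorem~\ref{T:JYExt} applies and yields, for each $I$,
$$
M^I_t = M_{t\wedge\rho_I} - \int_0^{t\wedge\rho_I}dA^I_s - \int_0^{t\wedge\rho_I}\frac{d\langle M,\mu^I\rangle_s + dJ^I_s}{Z^I_{s-}}
$$
is an $\mathbb H^I$~local martingale. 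Setting $N^I_t = \1{\sigma_I\leq\rho_I}(M^I_{t\wedge\rho_I}-M^I_{t\wedge\sigma_I})$, Lemma~\ref{L:gl1}(i) gives that $N^I$ is still an $\mathbb H^I$~local martingale, and Lemma~\ref{L:sum} yields $\sum_I N^I = M - A$ with $A$ as in the statement.

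The main obstacle is a generalization of Lemma~\ref{L:HG} to the present setting, namely: \emph{for every $\mathcal H^I_t$-measurable random variable $X$, the product $X\1{\sigma_I\leq t}$ is $\mathcal G^{(\nv\tau,\nv X)}_t$-measurable}. I would prove this via Monotone Class, starting from the generating family
$$
X = f\cdot k(\rho_I\wedge t)\cdot \ell\bigl(Y_I\1{\rho_I\leq t}\bigr)\cdot\prod_{i\in I} h_i(\tau_i)\,g_i(X_i),
$$
with $f$ $\mathcal F_t$-measurable and $k,\ell,h_i,g_i$ bounded Borel. On $\{\sigma_I\leq t\}$ one has $\tau_i\leq t$ for every $i\in I$, so $h_i(\tau_i)=h_i(\tau_i\wedge t)$ and $X_i=X_i\1{\tau_i\leq t}$, and both quantities are $\mathcal G^{(\nv\tau,\nv X)}_t$-measurable by construction. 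The factor $\rho_I\wedge t=\min_{j\notin I}(\tau_j\wedge t)$ is likewise $\mathcal G^{(\nv\tau,\nv X)}_t$-measurable, and using $P(\tau_i=\tau_j)=0$ for $i\neq j$ one writes
$$
Y_I\1{\rho_I\leq t} = \sum_{j\notin I}\bigl(X_j\1{\tau_j\leq t}\bigr)\1{\tau_j\wedge t\,=\,\rho_I\wedge t}\1{\rho_I\leq t},
$$
again $\mathcal G^{(\nv\tau,\nv X)}_t$-measurable. The Monotone Class theorem together with the null-set argument used in Lemma~\ref{L:21} then promotes the result to arbitrary $\mathcal H^I_t$-measurable $X$, and a parallel argument handles the predictable version for processes.

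Granted this extension, the rest of the argument is verbatim that of Theorem~\ref{T:dm1}. Decomposing $N^I = Y_1 + Y_2$ with $Y_1 = \1{\sigma_I\leq t<\rho_I}(M^I_t - M^I_{\sigma_I})$ and $Y_2 = \1{\sigma_I\leq\rho_I<t}(M^I_{\rho_I} - M^I_{\sigma_I})$ and applying the extension to each piece shows $N^I$ is $\mathbb G^{(\nv\tau,\nv X)}$~adapted; its predictable counterpart gives $\mathbb G^{(\nv\tau,\nv X)}$~predictability of $A$; and the proof of Lemma~\ref{L:st} carries over verbatim (it only uses the generalized Lemma~\ref{L:HG} applied to the $\mathcal H^I_t$-measurable indicator $\1{T_n\leq t}$) to show that $T_n'=(\sigma_I\vee T_n)\wedge(\rho_I\vee n)$ is a $\mathbb G^{(\nv\tau,\nv X)}$~stopping time whenever $T_n$ is an $\mathbb H^I$~stopping time. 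Combined with Lemma~\ref{L:gl1}(ii), this makes each $N^I_{\cdot\wedge T_n'}$ an $\mathbb H^I$~martingale that is $\mathbb G^{(\nv\tau,\nv X)}$~adapted, and therefore a $\mathbb G^{(\nv\tau,\nv X)}$~martingale. Summing $N^I$ over $I$ and invoking Lemma~\ref{L:sum} concludes the proof.
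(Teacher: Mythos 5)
Your proof is correct and follows essentially the same route as the paper's: the paper likewise reduces Theorem~\ref{T:dm11} to the argument of Theorem~\ref{T:dm1}, invoking Theorem~\ref{T:JYExt} via Lemma~\ref{L:inters} (applied with $\mathbb G^I$ as the base filtration) and re-running the Monotone Class argument of Lemma~\ref{L:HG} with the enlarged generating family involving $\ell(Y_I\1{\rho_I\leq t})$ and the $g_i$'s. Your write-up in fact supplies slightly more detail than the paper (the explicit expression of $Y_I\1{\rho_I\leq t}$ as a sum over $j\notin I$, using $P(\tau_i=\tau_j)=0$), but the approach is the same.
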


\begin{proof}
The proof is the same as that of Theorem~\ref{T:dm1}, except for the following points: Instead of Theorem~\ref{T:JY}, we invoke Theorem~\ref{T:JYExt}, which is justified by Lemma~\ref{L:inters}. Moreover, it must be verified that Lemma~\ref{L:HG} remains valid for the redefined $\mathbb H^I$ and $\mathbb G=\mathbb G^{(\nv \tau, \nv X)}$. This is easily done: in the proof of Lemma~\ref{L:HG}, simply replace $X=f k(\rho_I\wedge t) \prod_{i\in I} h_i(\tau_i)$ by
$$
X=f k(\rho_I\wedge t) \ell(Y_I \1{\rho_I\leq t} ) \prod_{i\in I} h_i(\tau_i) g_i(X_i\1{\tau_i\leq t}),
$$
where $\ell(\cdot)$ and $g_i(\cdot)$ are Borel functions.
\end{proof}

Define the process
$$
N^n_t=\sum_{i=1}^nX_i1_{\{\tau_i\leq t\}}
$$
Let $\mathbb{N}^n$ be the smallest right continuous filtration containing $\mathbb{F}$ and to which the process $N^n$ is adapted. Under the same assumption as in Theorem~\ref{T:dm11}, $\mathbb{F}$~semimartingales remain $\mathbb{N}^n$~semimartingales.

\begin{corollary}
Let $M$ be an $\mathbb F$~local martingale such that $M_0=0$. For each $I\subset\{1,\ldots,n\}$, suppose there exists a $\mathbb G^I$~predictable finite variation process $A^I$ such that $M-A^I$ is a $\mathbb G^I$~local martingale. Then $M$ is a $\mathbb{N}^n$~semimartingale.
\end{corollary}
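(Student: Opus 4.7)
The plan is to combine Theorem~\ref{T:dm11} with Stricker's theorem. By Theorem~\ref{T:dm11}, the hypotheses imply that $M$ is a $\mathbb G^{(\nv \tau,\nv X)}$~semimartingale with the explicit decomposition displayed there. Since the corollary only asks that $M$ be an $\mathbb N^n$~semimartingale, there is no need to exhibit the decomposition in $\mathbb N^n$; it suffices to show that $\mathbb N^n$ sits between $\mathbb F$ and $\mathbb G^{(\nv\tau,\nv X)}$ and then invoke Stricker.

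The first step is to verify $\mathbb N^n\subset\mathbb G^{(\nv\tau,\nv X)}$. Looking at the definition
$$
\mathcal G^{0,(\nv\tau,\nv X)}_t=\mathcal F_t\vee\sigma(\tau_i\wedge t:i=1,\ldots,n)\vee\sigma(X_i\nv 1_{\{\tau_i\leq t\}}:i=1,\ldots,n),
$$
each summand $X_i\nv 1_{\{\tau_i\leq t\}}$, and hence $N^n_t=\sum_{i=1}^n X_i\nv 1_{\{\tau_i\leq t\}}$, is $\mathcal G^{0,(\nv\tau,\nv X)}_t$-measurable and therefore $\mathcal G^{(\nv\tau,\nv X)}_t$-measurable. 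Thus $\mathbb G^{(\nv\tau,\nv X)}$ is a right-continuous filtration containing $\mathbb F$ to which $N^n$ is adapted, so by minimality of $\mathbb N^n$ we get $\mathbb N^n\subset\mathbb G^{(\nv\tau,\nv X)}$.

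The second step is to observe that $M$, being $\mathbb F$~adapted and hence $\mathbb N^n$~adapted, is a semimartingale in the larger filtration $\mathbb G^{(\nv\tau,\nv X)}$ by Theorem~\ref{T:dm11}. Stricker's theorem (already used in the proof of the $\mathbb G^\tau$ semimartingale property in Section~\ref{S:1}) then immediately yields that $M$ is an $\mathbb N^n$~semimartingale. There is no real obstacle here: the only non-trivial content is supplied by Theorem~\ref{T:dm11}, and the rest is a routine filtration-inclusion plus an appeal to Stricker.
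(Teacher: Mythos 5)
Your argument is correct and is essentially the paper's own proof: apply Theorem~\ref{T:dm11} to get the $\mathbb G^{(\nv\tau,\nv X)}$~semimartingale property, note $\mathbb N^n\subset\mathbb G^{(\nv\tau,\nv X)}$ with $M$ adapted to $\mathbb N^n$, and conclude by Stricker's theorem. The only difference is that you spell out the filtration inclusion via minimality of $\mathbb N^n$, which the paper takes as immediate.
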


\begin{proof}
Applying Theorem~\ref{T:dm11}, $M$ is a $\mathbb{G}^{(\nv \tau, \nv X)}$~semimartingale. Since $\mathbb{N}^n\subset\mathbb{G}^{(\nv \tau, \nv X)}$ and $M$ is adapted to $\mathbb{N}^n$, it follows from Stricker's theorem (\cite{Protter:2005} Theorem II.4) that $M$ is a $\mathbb{N}^n$~semimartingale.
\end{proof}

\section{Connection to filtration shrinkage} \label{S:shr}

It has been observed that the optional projection of a local martingale $M$ onto a filtration to which it is not adapted may lose the local martingale property, see~\cite{Follmer/Protter:2010}. However, general conditions for when this happens are not available. In this section we give a partial result in this direction. We prove that when $\mathbb F$ is a Brownian filtration, Jacod's criterion together with additional regularity of the conditional densities guarantee that for any sufficiently regular $\mathbb F$~local martingale $M$, $^oM^{\mathbb H}$ is a local martingale in $\mathbb G$, where $M^{\mathbb H}$ is the local martingale part of $M$ in $\mathbb H$ and $^oX$ denotes optional projection onto $\mathbb G$. For a locally integrable finite variation process $A$, its dual predictable projection onto $\mathbb G$ will be denoted $A^p$.

\begin{theorem}\label{T:30}
Let $M$ be an $\mathbb F$~local martingale and let $\mathbb H$ be a filtration that coincides with $\mathbb G$ after $\tau$. Suppose $M$ is an $\mathbb H$~semimartingale with decomposition
$$
M_t = M^{\mathbb H}_t + A^{\mathbb H}_t.
$$
Assume $A^{\mathbb H}$ has an optional projection $^oA^{\mathbb H}$ onto $\mathbb G$ which is $\mathbb G$~locally integrable. Then $^oM^{\mathbb H}$ exists. It is a $\mathbb G$~local martingale if and only if
$$
(A^{\mathbb H}_{\cdot\wedge\tau})^p_t = \int_0^{t\wedge\tau} \frac{d\langle M,\mu\rangle_s + dJ_s}{Z_{s-}} \ \text{a.s.\ for\ all\ } t\geq 0,
$$
where $Z$, $\mu$ and $J$ are as in Theorem~\ref{T:JY}.
\end{theorem}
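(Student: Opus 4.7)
My approach is to compare two $\mathbb G$-decompositions of $M$: the canonical one supplied by Theorem~\ref{T:gen}, and the one obtained by optionally projecting the $\mathbb H$-decomposition onto $\mathbb G$. Since both predictable finite variation parts must agree, the theorem's criterion will fall out by inspection.

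First I would note that $^oM^{\mathbb H}$ exists: $M$ is $\mathbb G$-adapted, hence equals its own $\mathbb G$-optional projection, and $^oA^{\mathbb H}$ exists by hypothesis, so linearity gives $^oM^{\mathbb H} = M - {}^oA^{\mathbb H}$. Then I would apply Theorem~\ref{T:gen} with the finite variation process $A = A^{\mathbb H}$ (the assumptions on $\mathbb G$ coming from the standing hypothesis of Section~\ref{S:1}) to obtain the canonical $\mathbb G$-decomposition $M = L + B$ with $L$ a $\mathbb G$-local martingale and
$$
B_t = \int_0^{t\wedge\tau} \frac{d\langle M,\mu\rangle_s + dJ_s}{Z_{s-}} + \int_{t\wedge\tau}^t dA^{\mathbb H}_s
$$
a $\mathbb G$-predictable FV process, the predictability of the second summand being guaranteed by Lemma~\ref{L:20}(ii). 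Next I would split $A^{\mathbb H} = A^{\mathbb H}_{\cdot\wedge\tau} + \int_{\cdot\wedge\tau}^{\cdot} dA^{\mathbb H}_s$; the second piece is already $\mathbb G$-predictable, so it equals its optional projection, and
$$
^oA^{\mathbb H}_t = {}^o(A^{\mathbb H}_{\cdot\wedge\tau})_t + \int_{t\wedge\tau}^t dA^{\mathbb H}_s.
$$
Subtracting from $M = L + B$ yields
$$
{}^oM^{\mathbb H}_t = L_t + \int_0^{t\wedge\tau} \frac{d\langle M,\mu\rangle_s + dJ_s}{Z_{s-}} - {}^o(A^{\mathbb H}_{\cdot\wedge\tau})_t,
$$
so, after absorbing $L$, the local martingale property of $^oM^{\mathbb H}$ is equivalent to the process $\int_0^{\cdot\wedge\tau} \tfrac{d\langle M,\mu\rangle_s + dJ_s}{Z_{s-}} - {}^o(A^{\mathbb H}_{\cdot\wedge\tau})$ being a $\mathbb G$-local martingale.

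For the final step I would invoke the classical identity that for any locally integrable variation process $C$, the difference $^oC - C^p$ between its $\mathbb G$-optional and $\mathbb G$-dual predictable projections is a $\mathbb G$-local martingale. Applied to $C = A^{\mathbb H}_{\cdot\wedge\tau}$, this reduces the question to whether
$$
\int_0^{t\wedge\tau} \frac{d\langle M,\mu\rangle_s + dJ_s}{Z_{s-}} - (A^{\mathbb H}_{\cdot\wedge\tau})^p_t
$$
is a $\mathbb G$-local martingale. Both summands are $\mathbb G$-predictable FV processes starting at zero, and a predictable local martingale of finite variation starting at zero is identically zero; hence the difference is a local martingale iff it vanishes, which is exactly the stated criterion.

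The main obstacle I anticipate lies in the integrability bookkeeping needed to make the projections well-defined: one must verify that $A^{\mathbb H}_{\cdot\wedge\tau}$ is $\mathbb G$-locally integrable so that $(A^{\mathbb H}_{\cdot\wedge\tau})^p$ exists, and that $^o(A^{\mathbb H}_{\cdot\wedge\tau})$ inherits local integrability so that $^o(A^{\mathbb H}_{\cdot\wedge\tau}) - (A^{\mathbb H}_{\cdot\wedge\tau})^p$ is a genuine local martingale. Both should follow by localizing with a common reducing sequence of $\mathbb G$-stopping times derived from the hypothesis that $^oA^{\mathbb H}$ is $\mathbb G$-locally integrable, together with $|A^{\mathbb H}_{\cdot\wedge\tau}| \leq |A^{\mathbb H}|$ in variation, but the argument should be made explicit.
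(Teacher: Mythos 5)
Your proposal is correct and follows essentially the same route as the paper: both express $^oM^{\mathbb H}$ as $M$ minus $^o(A^{\mathbb H}_{\cdot\wedge\tau})$ minus the post-$\tau$ part of $A^{\mathbb H}$ (via Lemma~\ref{L:20}(ii)), replace the optional projection by the dual predictable projection plus a $\mathbb G$-local martingale, identify the pre-$\tau$ drift through Theorem~\ref{T:gen}/Theorem~\ref{T:JY}, and conclude because a $\mathbb G$-predictable finite variation local martingale starting at zero must vanish. Your version is marginally more streamlined in invoking Theorem~\ref{T:gen} directly instead of first writing down the abstract $\mathbb G$-canonical decomposition, and your remark on verifying local integrability of $A^{\mathbb H}_{\cdot\wedge\tau}$ is a point the paper also passes over quickly.
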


\begin{proof}
Since $M$ is $\mathbb G$~adapted and $^oA^{\mathbb H}$ exists, $^oM^{\mathbb H}$ also exists. Using Lemma~\ref{L:20}~$(ii)$ we get
$$
^oM^{\mathbb H}_t = M_t -\ ^o(A^{\mathbb H}_{\cdot\wedge\tau})_t - \int_{t\wedge\tau}^t dA^{\mathbb H}_s.
$$
Moreover, since $^oA^{\mathbb H}$ exists and is $\mathbb G$~locally integrable, the same holds for $^o(A^{\mathbb H}_{\cdot\wedge\tau})$, which therefore has a compensator which coincides with the dual predictable projection $(A^{\mathbb H}_{\cdot\wedge\tau})^p$. Let $\tilde M_t =\ ^o(A^{\mathbb H}_{\cdot\wedge\tau}) - (A^{\mathbb H}_{\cdot\wedge\tau})^p$, which is a $\mathbb G$~local martingale. Then
$$
^oM^{\mathbb H}_t = M_t - (A^{\mathbb H}_{\cdot\wedge\tau})^p_t - \tilde M_t - \int_{t\wedge\tau}^t dA^{\mathbb H}_s.
$$
Since $M$ is an $\mathbb H$~semimartingale that is $\mathbb G$~adapted, it is also a $\mathbb G$~semimartingale, see Theorem~II.4 in Protter~\cite{Protter:2005}. Let its decomposition be given by
$$
M_t = M^{\mathbb G}_t + A^{\mathbb G}_t = M^{\mathbb G}_t + A^{\mathbb G}_{t\wedge\tau} + \int_{t\wedge\tau}^t dA^{\mathbb G}_s.
$$
By Theorem~\ref{T:gen}, we have $\int_{t\wedge\tau}^t dA^{\mathbb G}_s = \int_{t\wedge\tau}^t dA^{\mathbb H}_s$, so that
$$
^oM^{\mathbb H}_t  = \Big( M^{\mathbb G}_t  - \tilde M_t \Big)  +  \Big( A^{\mathbb G}_{t\wedge\tau} - (A^{\mathbb H}_{\cdot\wedge\tau})^p_t \Big).
$$
By Theorem~\ref{T:JY} we have
$$
A^{\mathbb G}_{t\wedge\tau} = \int_0^{t\wedge\tau} \frac{d\langle M,\mu\rangle_s + dJ_s}{Z_{s-}},
$$
so the result follows since the predictable, finite variation process $A^{\mathbb G}_{t\wedge\tau} - (A^{\mathbb H}_{\cdot\wedge\tau})^p_t$ is zero for all $t$ a.s.~if and only if $^oM^{\mathbb H}$ is a $\mathbb G$~local martingale.
\end{proof}

We now specialize to the case where $\mathbb F$ is generated by a Brownian motion $W$, and $\tau$ satisfies Jacod's Criterion (Assumption~\ref{A:J}). By martingale representation, the $\mathcal F_t$-conditional densities $p_t(u)$ satisfy
\begin{equation}\label{e2}
p_t(u) = p_0(u) + \int_0^t q_s(u)dW_s
\end{equation}
for some family $\{q(u): u\geq 0\}$ of $\mathbb F$~predictable processes $(q_t(u))_{t\geq 0}$. In order to prove the main result of this section, we need the following lemma.

\begin{lemma}\label{L:31}
Let $\mathbb G$ and $\mathbb H$ be two filtrations such that $\mathbb G\subset\mathbb H$. Suppose $a$ is an $\mathbb{H}$-adapted process such that $E\{ \int_0^t |a_s| ds\} < \infty$ for each $t\geq 0$. Then
$$
M_t = E\Big(\int_0^ta_sds\mid\mathcal{G}_t\Big)-\int_0^t E(a_s\mid\mathcal{G}_s)ds
$$
is a $\mathbb{G}$~martingale.
\end{lemma}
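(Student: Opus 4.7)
The plan is to verify the martingale property directly via the tower property and a conditional Fubini argument, the hypothesis $E\{\int_0^t|a_s|ds\}<\infty$ supplying the integrability needed to justify all interchanges. Integrability of $M_t$ itself is immediate from this bound, so it suffices to compute $E(M_t\mid\mathcal G_s)$ for $s<t$ and show it equals $M_s$.

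First, I would split $M_t$ into the two natural pieces and handle them separately. For the first piece, the tower property gives
$$
E\Big(E\big(\textstyle\int_0^t a_u\,du\mid \mathcal G_t\big)\mid\mathcal G_s\Big) = E\Big(\textstyle\int_0^t a_u\,du\mid\mathcal G_s\Big),
$$
which by conditional Fubini equals $\int_0^t E(a_u\mid\mathcal G_s)\,du$. I would then split this integral at $s$, keeping $\int_0^s E(a_u\mid\mathcal G_s)\,du$ intact and rewriting the tail as $\int_s^t E(a_u\mid\mathcal G_s)\,du$.

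For the second piece, I would again use conditional Fubini to obtain
$$
E\Big(\textstyle\int_0^t E(a_u\mid\mathcal G_u)\,du\mid\mathcal G_s\Big) = \int_0^s E(a_u\mid\mathcal G_u)\,du + \int_s^t E\bigl(E(a_u\mid\mathcal G_u)\mid\mathcal G_s\bigr)\,du,
$$
where on $[0,s]$ the integrand is already $\mathcal G_s$-measurable, and on $[s,t]$ the inclusion $\mathcal G_s\subset\mathcal G_u$ collapses the inner pair of conditional expectations to $E(a_u\mid\mathcal G_s)$. Thus the tail integrals $\int_s^t E(a_u\mid\mathcal G_s)\,du$ appearing in both pieces cancel, leaving
$$
E(M_t\mid\mathcal G_s) = \int_0^s E(a_u\mid\mathcal G_s)\,du - \int_0^s E(a_u\mid\mathcal G_u)\,du,
$$
and a final application of conditional Fubini identifies the first term with $E(\int_0^s a_u\,du\mid\mathcal G_s)$, giving exactly $M_s$.

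The only genuine obstacle is keeping track of which of the two filtrations is involved at each step; the $\mathbb H$-adaptedness of $a$ is never actually used, only its integrability and the fact that $E(a_u\mid\mathcal G_u)$ makes sense. The nesting $\mathcal G_s\subset\mathcal G_u$ for $u\ge s$ is what drives the cancellation. All applications of conditional Fubini are legitimate because $\int_0^t E|a_u|\,du=E\int_0^t|a_u|\,du<\infty$ by assumption, so no technical subtlety beyond bookkeeping is involved.
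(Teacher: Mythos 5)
Your proof is correct and follows essentially the same route as the paper: condition $M_t$ on $\mathcal G_s$, apply the tower property and conditional Fubini (justified by $E\int_0^t|a_u|\,du<\infty$), and cancel the tail integrals $\int_s^t E(a_u\mid\mathcal G_s)\,du$ coming from the two pieces. No gaps.
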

 
\begin{proof}
Let $0\leq s<t$. Then
\begin{align*}
E(M_{t}\mid\mathcal{G}_s)&=E\Big(E(\int_0^{t} a_udu\mid\mathcal{G}_{t})-\int_0^{t}E(a_u\mid\mathcal{G}_u)du\mid\mathcal{G}_s\Big)\\
&=E(\int_0^{t}a_udu\mid\mathcal{G}_s)-E(\int_0^{t}E(a_u\mid\mathcal{G}_u)du\mid\mathcal{G}_s)\\
&=E(\int_0^sa_udu\mid\mathcal{G}_s)+E(\int_s^{t}a_udu\mid\mathcal{G}_s)-\int_0^sE(a_u\mid\mathcal{G}_u)du-\mathbb{E}(\int_s^{t}\mathbb{E}(a_u\mid\mathcal{G}_u)du\mid\mathcal{G}_s).
\end{align*}
By Fubini's theorem we obtain $E(\int_s^{t}E(a_u\mid\mathcal{G}_u)du\mid\mathcal{G}_s)=\int_s^{t}E(a_u\mid\mathcal{G}_s)du$ and $E(\int_s^{t}a_udu\mid\mathcal{G}_s)=\int_s^{t}E(a_u\mid\mathcal{G}_s)du$. Hence
$$
E(M_{t}\mid\mathcal{G}_s)=E\Big(\int_0^{t}a_udu\mid\mathcal{G}_u\Big)-\int_0^sE(a_u\mid\mathcal{G}_u)du=M_s,
$$
as required.
\end{proof}

We may now state the following theorem.

\begin{theorem}
Assume the processes $q(u)$ given in~\eqref{e2} are bounded by some integrable process. Let $M$ be an $\mathbb F$~local martingale and $k_t(u)$ be as in Theorem~\ref{T:J}. If
$$
E\Big( \int_0^t \big| k_s(\tau) \big| d\langle M,M\rangle_s \Big) < \infty \quad \text{for\ all\ } t\geq 0,
$$
then $^oM^{\mathbb H}$ is a $\mathbb G$~local martingale, where $M^{\mathbb H}$ is the local martingale part of the $\mathbb H$~decomposition of $M$.
\end{theorem}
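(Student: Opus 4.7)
The plan is to apply Theorem~\ref{T:30} with $\mathbb{H}$ being the initial expansion of $\mathbb{F}$ with $\tau$, which by Lemma~\ref{L:21} coincides with $\mathbb G$ after~$\tau$. Under Jacod's criterion, Theorem~\ref{T:J} provides the $\mathbb H$-decomposition $M = M^{\mathbb H} + A^{\mathbb H}$ with $A^{\mathbb H}_t = \int_0^t k_s(\tau)\, d\langle M,M\rangle_s$. The standing integrability hypothesis makes $A^{\mathbb H}$ of integrable variation on each $[0,t]$, so its $\mathbb G$ optional projection exists and is locally integrable, placing us inside the framework of Theorem~\ref{T:30}. What remains is to verify the projection identity
$$
(A^{\mathbb H}_{\cdot\wedge\tau})^p_t = \int_0^{t\wedge\tau} \frac{d\langle M,\mu\rangle_s + dJ_s}{Z_{s-}}.
$$

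To compute the left side, I use martingale representation in $\mathbb F$ to write $M_t = M_0 + \int_0^t h_s\, dW_s$ and $d\langle M,M\rangle_s = h_s^2\, ds$. Applying Lemma~\ref{L:31} to the $\mathbb H$-adapted process $a_s = k_s(\tau)\, h_s^2\, \1{s\leq\tau}$ (whose integrability is exactly the standing hypothesis) gives $(A^{\mathbb H}_{\cdot\wedge\tau})^p_t = \int_0^t E(a_s\mid\mathcal G_s)\, ds$. Combining the key property $\mathcal G_s \cap \{\tau>s\} = \mathcal F_s \cap \{\tau>s\}$ with Jacod's conditional density, the conditional expectation collapses on $\{\tau>s\}$ to $h_s^2\, Z_s^{-1}\, \int_s^\infty k_s(u)\, p_s(u)\, \eta(du)$ and vanishes on $\{\tau\leq s\}$.

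The right side is matched by exploiting the Brownian structure. Continuity of $M$ forces $J\equiv 0$ and continuity of $Z$ gives $Z_{s-}=Z_s$. Writing $Z_t = \int_t^\infty p_t(u)\,\eta(du)$ and using~\eqref{e2}, the assumption that $q(u)$ is dominated by an integrable process licenses a stochastic Fubini argument, producing the representation $\mu_t = \int_0^t \beta_s\, dW_s$ with $\beta_s = \int_s^\infty q_s(u)\,\eta(du)$. The Jacod identity $d\langle p(u), M\rangle_s = k_s(u)\, p_{s-}(u)\, d\langle M,M\rangle_s$ then reads $q_s(u)\, h_s = k_s(u)\, p_s(u)\, h_s^2$ a.s.\ (using a.s.\ continuity of $p_\cdot(u)$), so $h_s^2\, \int_s^\infty k_s(u)\, p_s(u)\,\eta(du) = h_s\, \beta_s = d\langle M,\mu\rangle_s/ds$. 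Dividing by $Z_s$, integrating, and invoking Theorem~\ref{T:30} closes the argument.

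The main obstacle I anticipate is the stochastic Fubini step used to extract the martingale representation of $Z$ from~\eqref{e2}, since this is precisely where the domination of $q(u)$ by an integrable process enters in a nontrivial way. A secondary measure-theoretic worry is that the identification $k_s(u)\, p_{s-}(u) = k_s(u)\, p_s(u)$ must hold in the appropriate $ds\otimes\eta(du)$ sense, which follows from the a.s.\ continuity of $p_\cdot(u)$ in the Brownian filtration; everything else is bookkeeping once these two points are settled.
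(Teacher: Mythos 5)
Your route is the paper's own: invoke Theorem~\ref{T:30} with $\mathbb H$ the initial expansion, get $A^{\mathbb H}_t=\int_0^t k_s(\tau)\,d\langle M,M\rangle_s$ from Theorem~\ref{T:J}, compute $(A^{\mathbb H}_{\cdot\wedge\tau})^p$ via Lemma~\ref{L:31} and the conditional density on $\{\tau>s\}$, and match it with the Jeulin--Yor drift through the Brownian representations of $M$ and $Z$. That part lines up with the paper. However, the step where you claim that the domination hypothesis ``licenses a stochastic Fubini argument, producing $\mu_t=\int_0^t\beta_s\,dW_s$ with $\beta_s=\int_s^\infty q_s(u)\,\eta(du)$'' is not justified as stated, and it is exactly where the remaining work sits. $Z$ is only a supermartingale, and in $Z_t=\int_t^\infty p_t(u)\,\eta(du)$ the time $t$ enters both the integrand and the domain, so Fubini applied directly does not yield a stochastic-integral representation. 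What one actually does (and the paper does) is write $Z_t=1-\int_0^t p_t(u)\,\eta(du)$, insert~\eqref{e2}, and interchange integrals (this is where the domination of the $q(u)$ is used) to obtain
$$
Z_t = 1 - \int_0^t p_u(u)\,\eta(du) - \int_0^t \Big(\int_0^s q_s(u)\,\eta(du)\Big)\,dW_s ,
$$
so that Doob--Meyer identifies the integrand of $\mu$ as $-\int_0^s q_s(u)\,\eta(du)$, not $\int_s^\infty q_s(u)\,\eta(du)$. To convert the former into the latter --- which is precisely what is needed to match your expression for $(A^{\mathbb H}_{\cdot\wedge\tau})^p$ --- you need the additional identity $\int_0^\infty q_s(u)\,\eta(du)=0$ for a.e.\ $s$, which follows from $\int_0^\infty p_t(u)\,\eta(du)=1$ for all $t$ together with one more Fubini interchange. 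The paper proves this as a separate claim; your proposal omits it, and without it the two sides do not visibly agree.

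A minor further point: the continuity of $Z$ you invoke is not guaranteed under Jacod's criterion (only the martingale part $\mu$ is continuous in the Brownian filtration; the compensator of $\1{\tau\leq t}$ may jump, e.g.\ if $\tau$ charges a fixed time). Fortunately you do not need it: $J\equiv 0$ and $\langle M,\mu\rangle=\langle M,Z\rangle$ follow from continuity of $M$ alone, and $Z_{s-}=Z_s$ for all but countably many $s$, which is enough inside the $ds$-integrals appearing on both sides.
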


\begin{proof}
Note that $M$ is indeed an $\mathbb H$~semimartingale by Theorem~\ref{T:J}, and that its finite variation part $A^{\mathbb H}$ is locally integrable and given by
$$
A^{\mathbb H}_t =  \int_0^t  k_s(\tau) d\langle M,M\rangle_s.
$$
By Theorem~\ref{T:30} we must show that $(A^{\mathbb H}_{\cdot\wedge\tau})^p$ and $\int_0^{t\wedge\tau} \frac{d\langle M,Z\rangle_s}{Z_{s-}}$ are equal, so we compute these two quantities. We start with the first one. By martingale representation, there is a predictable process $(m_t)_{t\geq 0}$ such that $M_t = M_0 + \int_0^t m_sdW_s$. Thus $d\langle M,M\rangle_s=m^2_s ds$, and using Lemma~\ref{L:31} we get
$$
^o(A^{\mathbb H}_{\cdot\wedge\tau})_t = E\Big( \int_0^t \1{\tau> s} k_s(\tau) m^2_s ds \mid \mathcal G_t\Big) = N_t + \int_0^t \1{\tau> s} E\left( k_s(\tau)  \mid \mathcal G_s \right) m^2_s ds,
$$
where $N$ is a $\mathbb G$~martingale. Hence $(A^{\mathbb H}_{\cdot\wedge\tau})^p$ is given by the second term on the right side, so we focus on this term. We have
$$
 \1{\tau> s}E( k_s(\tau) \mid\mathcal G_s) =  \1{\tau> s}\frac{\int_s^\infty k_s(u) p_s(u) du}{\int_s^\infty p_s(u) du} =  \1{\tau> s}\frac{1}{Z_{s-}} \int_s^\infty k_s(u)p_s(u)du,
$$
where the first equality holds because the conditional density of $\tau$ given $\mathcal F_t\vee\sigma(\{\tau>s\})$ is $k_s(u) / \int_s^\infty p_s(u)du$. Using part~$(i)$ of Theorem~\ref{T:J} we then get
\begin{equation}\label{Eq:40}
(A^{\mathbb H}_{\cdot\wedge\tau})^p_t = \int_0^{t\wedge\tau} \frac{1}{Z_{s-}}\left(  \int_s^\infty k_s(u)p_s(u)du \right) m^2_s ds =  \int_0^{t\wedge\tau} \frac{1}{Z_{s-}}\left(  \int_s^\infty q_s(u) du \right) m_s ds.
\end{equation}

We now deal with $\int_0^{t\wedge\tau} \frac{d\langle M,Z\rangle_s}{Z_{s-}}$. Since $Z_t = \int_t^\infty p_t(u) du = 1 - \int_0^t p_t(u)du$, we have
$$
Z_t + \int_0^t p_u(u)du = 1 + \int_0^t (p_u(u) - p_t(u) )du = 1 - \int_0^t \int_u^t q_s(u)dW_s du.
$$
Since all $q(u)$ are bounded by an integrable process, we may exchange the integrals in the last term. This yields
$$
Z_t + \int_0^t p_u(u)du = 1 - \int_0^t \int_0^u q_s(u) du dW_s.
$$
Hence $d\langle Z,M\rangle_s = -m_s ( \int_0^sq_s(u)du)ds$. Now, we claim that $\int_0^\infty q_s(u)du=0$ for all~$s$ a.s. This follows from
$$
0=\int_0^\infty (p_t(u) - p_0(u))du = \int_0^\infty\int_0^t q_s(u)dW_s du = \int_0^t\int_0^\infty q_s(u)dudW_s,
$$
where the first equality holds because $\int_0^{\infty}p_t(u)du=1$ for every $t\geq 0$ a.s.
Thus $d\langle Z,M\rangle_s = m_s ( \int_s^\infty q_s(u)du)ds$, so
$$
\int_0^{t\wedge\tau} \frac{d\langle M,Z\rangle_s}{Z_{s-}} = \int_0^{t\wedge\tau} \frac{1}{Z_{s-}} m_s \left( \int_s^\infty q_s(u)du \right)ds,
$$
which coincides with the previously established expression for $(A^{\mathbb H}_{\cdot\wedge\tau})^p_t$ given in Equation~(\ref{Eq:40}). The claim now follows from Theorem~\ref{T:30}.
\end{proof}

\end{document}